\documentclass{amsart}
\usepackage{amssymb}
\usepackage{color}
\usepackage{float}
\usepackage[all,cmtip]{xy}
\usepackage{graphicx}
\usepackage{hyperref}
\usepackage{cancel}
\usepackage{pstricks}
\usepackage{mathabx}



\newcommand{\Acal}{{\mathcal{A}}}

\newcommand{\Ecal}{{\mathcal{E}}}

\newcommand{\Hcal}{{\mathcal{H}}}

\newcommand{\Mcal}{{\mathcal{M}}}
\newcommand{\Ncal}{{\mathcal{N}}}

\newcommand{\Pcal}{{\mathcal{P}}}

\newcommand{\Scal}{{\mathcal{S}}}

\newcommand{\acal}{\mathfrak{a}}
\newcommand{\pacal}{\mathfrak{pa}}
\newcommand{\tacal}{{\mathfrak{ta}}}

\newcommand{\CC}{\mathbb{C}}

\newcommand{\NN}{\mathbb{N}}

\newcommand{\RR}{\mathbb{R}}

\newcommand{\ZZ}{\mathbb{Z}}

\newcommand{\id}{\textbf{\textit{I}}}
\newcommand{\im}{\operatorname{im}}

\newcommand{\Vol}{\operatorname{Vol}}

\newcommand{\FH}{\operatorname{FH}}
\newcommand{\FC}{\operatorname{FC}}
\renewcommand{\TH}{\operatorname{TH}}
\newcommand{\TC}{\operatorname{TC}}
\newcommand{\AH}{\operatorname{AH}}
\newcommand{\AC}{\operatorname{AC}}
\newcommand{\PH}{\operatorname{PH}}

\newcommand{\WslantH}{\operatorname{WH}}
\newcommand{\WcheckH}{\operatorname{\check wH}}

\newcommand{\Crit}{\operatorname{Crit}}

\renewcommand{\id}{\operatorname{id}}

\newcommand{\haat}{\widehat}
\newcommand{\tiilde}{\widetilde}


\newtheorem{theorem}{Theorem}
\newtheorem*{theorem*}{Theorem}
\newtheorem{proposition}{Proposition}[section]
\newtheorem{lemma}[proposition]{Lemma}

\newtheorem{corollary}[theorem]{Corollary}

\theoremstyle{definition}
\newtheorem{definition}[proposition]{Definition}
\newtheorem{example}[proposition]{Example}

\newtheorem{assumption}[proposition]{Assumption}

\theoremstyle{remark}
\newtheorem{remark}[proposition]{Remark}

\numberwithin{equation}{section}

\hyphenation{mani-fold ge-ne-ra-li-zed diffeo-mor-phism con-ti-nua-tion}

\begin{document}

\title[]{Positive topological entropy of positive contactomorphisms}

\author{Lucas Dahinden}
\address{Universit\'e de Neuch\^atel (UNINE)}
\curraddr{L. Dahinden, Dept. Mathematiques Rue Emile-Argand 2, 2000 Neuchatel}
\email{l.dahinden@gmail.com}
\thanks{The first author was supported by SNF grant 200021-163419/1.}

\subjclass[2010]{Primary 53D35; Secondary 53D40, 57R17}

\date{\today}

\begin{abstract}
A positive contactomorphism of a contact manifold $M$ is the end point of a contact isotopy on $M$ that is always positively transverse to the contact structure. 
Assume that $M$ contains a Legendrian sphere $\Lambda$, and that $(M,\Lambda)$ is fillable by a Liouville domain $(W,\omega)$ with exact Lagrangian $L$ such that $\omega|_{\pi_2(W,L)}=0$. 
We show that if the exponential growth of the action filtered wrapped Floer homology of $(W,L)$ is positive, then every positive contactomorphism of $M$ has positive topological entropy. This result generalizes the result of Alves and Meiwes from Reeb flows to positive contactomorphisms, and it yields many examples of contact manifolds on which every positive contactomorphism has positive topological entropy, among them the exotic contact spheres found by Alves and Meiwes.
	
	A main step in the proof is to show that wrapped Floer homology is isomorphic to the positive part of Lagrangian Rabinowitz--Floer homology. 
\end{abstract}

\maketitle

\section{Introduction and results}\label{sec:intro}

An important problem in the study of dynamical systems is to understand 
the complexity of the mappings in question.
A good numerical measure for complexity is the topological entropy.
Consider a compact manifold $M$ and a class of diffeomorphisms~$\mathcal D$
of~$M$. A much studied question is whether a generic map from~$\mathcal D$
has positive topological entropy. 
A very different question is whether {\it every}\/ map from~$\mathcal D$
has positive topological entropy. 
This latter question is only interesting under further assumptions on~$M$ and~$\mathcal D$.
Here, we assume that $M$ is a compact manifold endowed with a contact structure~$\xi$, namely a completely
non-integrable distribution of hyperplanes in the tangent bundle~$TM$.
If we also assume that $\xi$ is co-orientable, namely that there exists a 1-form~$\alpha$ on~$M$
with $\xi = \ker \alpha$, then associated with every choice of such an~$\alpha$ there is a natural flow
generated by the vector field~$R_\alpha$ implicitly defined
by the two equations
$$
d\alpha (R_{\alpha}, \cdot) = 0, \qquad \alpha (R_\alpha) = 1.
$$
Such flows are called Reeb flows of~$\alpha$.
They arise as the restriction of many classical Hamiltonian systems to fixed energy levels.
In particular, geodesic flows are Reeb flows.

The first result on positive topological entropy of all Reeb flows on a class of contact manifolds
was obtained by Macarini--Schlenk in~\cite{MacSch11}, who generalized previous results by Dinaburg, Paternain--Petean 
and Gromov on geodesic flows:
Every Reeb flow on the cosphere bundle over a closed manifold~$Q$ has positive topological
entropy, provided that the topology of~$Q$ is ``sufficiently complicated'' 
(for instance, if the fundamental group or the homology of the based loop space of~$Q$ has exponential growth).

Reeb flows form a quite special class of mappings on a contact manifold. 
In~\cite{D18} the above result was generalized  to a much larger class of diffeomorphisms on the same manifolds,
namely to time-dependent Reeb flows.
Let $(M,\xi)$ be a co-oriented contact manifold. 
A smooth path $\varphi_t$ on~$M$ is a time-dependent Reeb flow 
if it is generated by a time-dependent vector field $R_{\alpha_t}$, where each $\alpha_t$
is a contact form for~$\xi$.
There is a more topological perspective on such flows:
They are exactly the positive contact isotopies on~$(M,\xi)$,
namely the isotopies~$\varphi_t$ with $\varphi_0 = \id$ that are everywhere positively transverse to~$\xi$:
$$
\alpha \left( \tfrac{d}{dt} \varphi_t(x) \right) >0
$$ 
for all $t$ and all $x \in M$, 
for one and hence any contact form~$\alpha$ for~$\xi$.

\begin{definition}
{\rm 
A {\it positive contactomorphism}\/ on $(M,\xi)$ is the end point~$\varphi_1$ of a positive contact isotopy on~$\varphi_t$, 
$t \in [0,1]$.
}
\end{definition}

The first results on positive topological entropy of all Reeb flows on contact manifolds different from 
cosphere bundles were given by Alves~\cite{Alv16a,Alv16b,A17} in dimension three.
More recently, Alves and Meiwes~\cite{AM17} constructed many examples of higher dimensional
contact manifolds for which every Reeb flow has positive topological entropy.
In particular, they found on every sphere of dimension at least~seven a contact structure with
this property. 
They asked the natural question whether their results extend from Reeb flows to positive contactomorphisms.
The present paper answers this question in the affirmative.

We will work in the following geometric setting that is further explained in Section~\ref{sec:wfh}.
\begin{assumption}\label{f=0} 
	The pair $(W,L)$ consists of a Liouville domain $(W,\omega,\lambda)$ with compact contact boundary $(M,\xi=\ker\lambda|_M)$ and an asymptotically conical exact Lagrangian~$L$ with connected Legendrian boundary $\Lambda=\partial L$ such that $\lambda|_L=0$, such that $[\omega]|_{\pi_2(W,L)}=0$, and such that $(\lambda,L)$ is regular. Here, regular means that $\bigcup_{t\neq0} \varphi^t_{\lambda|_M}(\Lambda)$ and $\Lambda$ intersect transversely, where $\varphi^t_{\lambda|_M}$ is the Reeb flow of $\lambda|_M$.
\end{assumption}
 
Under this assumption we can define a $\ZZ_2$-vector space $\WslantH(W,L)$, which we call wrapped Floer homology, see Section~\ref{sec:wh} for the definition. This is a filtered homology, thus for every $a$ there is a vector space $\WslantH^a(W,L)$ and a morphism $\iota_a:\WslantH^a(W,L)\to \WslantH(W,L)$. The vector spaces $\WslantH^a(W,L)$ are finite dimensional. The following notion is taken from~\cite{AM17}.
\begin{definition}[Symplectic growth]
	For a function $f:X\to\RR$, where $X=\NN$ or $X=\RR$, we define the exponential growth of $f$ as 
	\begin{equation*}
		\Gamma(f(a))=\limsup_{a\to\infty} \frac{1}{a}\log(f(a)).
	\end{equation*}
	If $\Gamma(f)>0$, we say that $f$ grows exponentially. We define the symplectic growth of the pair $(W,L)$ as the growth in dimension of the filtered wrapped Floer homology
	\begin{equation*}
		\Gamma^{\rm symp}(W,L)=\Gamma(\dim \WslantH^a(W,L)).
	\end{equation*}
\end{definition}
 
All but finitely many of the generators of the chain complexes underlying wrapped Floer homology correspond to Reeb chords from $\Lambda$ to itself, and the filtration corresponds to the length of these Reeb chords. With length we mean time of arrival.

Alves and Meiwes showed that if $\Lambda=\partial L$ is a sphere, then positivity of $\Gamma^{\rm symp}(W,L)$ implies that every Reeb flow on $M=\partial W$ has positive topological entropy. Our main result is that this theorem extends to positive contactomorphisms. 

\begin{theorem}\label{poscont}
	Under Assumption~\ref{f=0} assume that $\Gamma^{\rm symp}(W,L)>0$. Then the topological entropy of every positive contactomorphism of $(M,\xi)$ is positive.
\end{theorem}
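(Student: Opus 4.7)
The plan is to adapt the Alves–Meiwes argument from Reeb flows to arbitrary positive contactomorphisms; the bridge is the main auxiliary result announced in the abstract, namely the isomorphism between wrapped Floer homology and the positive (action) part of Lagrangian Rabinowitz–Floer homology. Rabinowitz–Floer homology is defined for a much wider class of Hamiltonians than wrapped Floer and enjoys strong invariance under Hamiltonian deformations, which is exactly what is needed to upgrade the hypothesis on counts of Reeb chords (encoded by $\WslantH^{a}(W,L)$) into a statement about chords in the presence of an arbitrary positive contactomorphism.

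First, for a given positive contactomorphism $\varphi=\varphi_1$ on $(M,\xi)$, I would encode the iterates $\varphi^n$ into the Floer setup. The isotopy $\varphi_t$ lifts to the symplectization $\RR\times M$ as a Hamiltonian isotopy generated by a $1$-homogeneous Hamiltonian $H_t$, and positivity of $\varphi_t$ translates into positivity of $H_t$ on $\{0\}\times M$. Using $H_t$ together with a Lagrange multiplier (or, equivalently, concatenating with a long wrapping Hamiltonian) one defines an action-filtered Lagrangian Rabinowitz–Floer complex whose critical points correspond to mixed chords from $\Lambda$ to $\varphi^n(\Lambda)$ of bounded action.

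Second, I would compare this complex with the action-filtered wrapped Floer complex by continuation. Positivity of $\varphi_t$ makes a linear interpolation between the wrapping Hamiltonian and $H_t$ monotone in a controlled way, so the standard $L^2$-energy estimate yields
\[
   \dim \WslantH^{a}(W,L) \;\leq\; C\cdot \#\bigl(\Lambda \cap \varphi^n(\Lambda)\bigr)
\]
whenever $a\leq c\,n$, with $c,C>0$ depending only on bounds on $H_t$. Combined with $\Gamma^{\rm symp}(W,L)>0$, this forces $\#(\Lambda \cap \varphi^n(\Lambda))$ to grow exponentially in $n$. Finally, the Yomdin-type argument of \cite{AM17} converts this into positive topological entropy: because $\Lambda$ is a Legendrian sphere, the Riemannian volume of $\varphi^n(\Lambda)$ in a fixed auxiliary metric dominates, up to a multiplicative constant, the number of its transverse intersections with $\Lambda$, and Yomdin's theorem then bounds $h_{\rm top}(\varphi)$ from below by the exponential volume growth rate of $\varphi^n(\Lambda)$.

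The principal obstacle I anticipate lies in the second step. One has to manufacture a family of admissible Hamiltonians interpolating the Reeb-type wrapping Hamiltonian with the lift of $\varphi^n$, stay in the class for which Lagrangian Rabinowitz–Floer homology is defined, maintain $C^0$-bounds on Floer trajectories so that generators do not escape through the cylindrical end of $W$, and extract an action shift growing only linearly in $n$. This is precisely the technical payoff of the wrapped–vs–positive-Rabinowitz isomorphism: wrapped Floer homology in its usual formulation does not admit such general continuation maps, whereas the Rabinowitz complex does, and the isomorphism then transports the exponential growth statement from one theory to the other.
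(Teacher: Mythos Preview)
Your overall architecture matches the paper's: pass from $\WslantH$ to Lagrangian Rabinowitz--Floer homology (the isomorphism $\WslantH_+^a \cong \AH_+^a$ is the content of Propositions~\ref{acuteischeck} and~\ref{alrfhvwh}), then use monotone continuation in the Rabinowitz framework to handle an arbitrary positive contact Hamiltonian (Proposition~\ref{continuation}), and finally deduce volume growth and apply Yomdin. Your diagnosis of why Rabinowitz--Floer rather than wrapped Floer is the right tool for the continuation step is exactly the point.

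The genuine gap is in your final step. The claim that ``the Riemannian volume of $\varphi^n(\Lambda)$ dominates, up to a multiplicative constant, the number of its transverse intersections with~$\Lambda$'' fails on two counts. First, $\Lambda$ and $\varphi^n(\Lambda)$ are both $(n-1)$-dimensional Legendrians in the $(2n-1)$-dimensional contact manifold~$M$, so for generic~$\varphi$ they do not intersect at all; the displayed inequality $\dim\WslantH^a \leq C\cdot\#(\Lambda\cap\varphi^n(\Lambda))$ therefore cannot hold as written. Second, even in a setting where the two submanifolds have complementary dimension, the number of transverse intersections is not controlled by the volume of either one: a submanifold can oscillate across another, producing arbitrarily many intersection points while keeping bounded volume. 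There is no local inequality of the type you invoke.

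What the paper actually does (and what Alves--Meiwes already need for Reeb flows) is to count $\varphi^t$-chords not only from $\Lambda$ to itself but from $\Lambda$ to every Legendrian fibre $\Lambda'$ in a tubular neighborhood $\Ncal \cong B^n\times\Lambda$. Proposition~\ref{changeLegendrian} shows that the exponential growth bound persists uniformly for each such~$\Lambda'$. The swept set $\bigcup_{t\in[0,T]}\varphi^t(\Lambda)$ is $n$-dimensional, hence of complementary dimension to each fibre $\Lambda'=\{b\}\times\Lambda$ in~$M$; the chords to~$\Lambda'$ are precisely its intersection points with~$\Lambda'$, and integrating the count over $b\in B^n$ gives a Crofton-type lower bound for $\Vol_n\bigl(\bigcup_{t\leq T}\varphi^t(\Lambda)\bigr)$. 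A further elementary estimate then converts this into exponential growth of $\Vol_{n-1}(\varphi^T\Lambda)$, after which Yomdin's theorem applies. Your outline is missing exactly this family-of-Legendrians and integral-geometric step; without it the passage from chord counts to volume growth does not go through.
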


Since a generic fiber of a cosphere bundle over a closed manifold satisfies Assumption~\ref{f=0}, this result also generalizes the works~\cite{MacSch11} mentioned earlier. 

Theorem~\ref{poscont} implies in particular that in the examples constructed by Alves and Meiwes every positive contactomorphism has positive topological entropy.
\begin{corollary}
	Let $M$ be the sphere $S^{2n+1}$ of dimension $2n+1\geq 7$, or $S^3\times S^2$, or the boundary of a plumbing tree whose vertices are unit codisc bundles over manifolds of dimension $\geq 4$. Then $M$ admits a contact structure $\xi$ such that every positive contactomorphism of $\xi$ has positive topological entropy.  
	%
	%
\end{corollary}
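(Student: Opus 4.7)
The plan is to apply Theorem~\ref{poscont} to the examples already constructed by Alves--Meiwes in~\cite{AM17}. For each of the listed contact manifolds $(M,\xi)$, it suffices to exhibit a Liouville domain $(W,\omega,\lambda)$ with an exact Lagrangian $L$ satisfying Assumption~\ref{f=0}, whose contact boundary is $(M,\xi)$, and for which $\Gamma^{\rm symp}(W,L)>0$; Theorem~\ref{poscont} then immediately yields positive topological entropy for every positive contactomorphism.

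For the plumbing case I would take $W$ to be the Weinstein plumbing of unit codisc bundles $D^*Q_i$ with $\dim Q_i\geq 4$ along the prescribed tree, so that $M=\partial W$ by construction, and take $L$ to be a cotangent fibre $T^*_qQ_i$ at a vertex. This $L$ is an exact Lagrangian disc whose boundary $\Lambda$ is a Legendrian sphere in $M$. The two remaining cases are reduced to this one: the exotic contact structures on $S^{2n+1}$ with $2n+1\geq 7$ and on $S^3\x S^2$ appearing in the corollary are, by~\cite{AM17}, realised as the contact boundaries of specific such plumbing trees (for the spheres the building blocks are $D^*S^{n+1}$ with $n+1\geq 4$, matching the dimension constraint), where the diffeomorphism type of the boundary is identified by standard Weinstein/handle-body arguments.

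Next I would verify Assumption~\ref{f=0} for these fillings. Exactness of $L$ and the condition $\lambda|_L=0$ are built into the Weinstein plumbing, since cotangent fibres are exact and one may take the Liouville form to restrict to zero on $L$. The vanishing $[\omega]|_{\pi_2(W,L)}=0$ then follows from $\omega=d\lambda$: for any smooth map $u:(D^2,\partial D^2)\to(W,L)$ one has $\int u^*\omega=\int_{\partial D^2}u^*\lambda=0$. Regularity of $(\lambda|_M,\Lambda)$ is a generic condition and can be arranged by an arbitrarily small perturbation of the contact form near $M$, which does not affect $\Gamma^{\rm symp}(W,L)$ because the latter is an invariant of the filtered wrapped Floer homology.

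Positivity of $\Gamma^{\rm symp}(W,L)$ for exactly these pairs is the central computation in~\cite{AM17}: the authors model wrapped Floer homology through the Legendrian contact homology DG-algebra associated with the cotangent fibre and exhibit exponentially many Reeb chords of bounded length, using the combinatorial structure of the plumbing tree to force free-product-type growth. The main obstacle in writing out this proof is therefore not analytic but bookkeeping: one must verify that the Lagrangian for which Alves--Meiwes establish exponential growth of wrapped Floer homology is (up to exact Lagrangian isotopy through objects satisfying Assumption~\ref{f=0}) the same Lagrangian entering Theorem~\ref{poscont}. Once this identification is in hand, Theorem~\ref{poscont} delivers the corollary.
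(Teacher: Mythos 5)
Your proposal matches the paper's own (implicit) argument: the corollary is deduced by applying Theorem~\ref{poscont} to the fillings $(W,L)$ constructed by Alves--Meiwes in~\cite{AM17}, whose positive symplectic growth and compatibility with Assumption~\ref{f=0} (including the freedom to perturb $\lambda$ to achieve regularity without losing positivity of growth) are exactly the points the paper relies on. Your additional bookkeeping on plumbings, cotangent fibres, and the vanishing of $[\omega]$ on $\pi_2(W,L)$ is correct and consistent with the paper's discussion of Assumption~\ref{f=0}.
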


\subsubsection*{Method of proof} Alves and Meiwes prove their theorem using wrapped Floer homology $\WslantH$, which is a Lagrangian (or open string) version of symplectic homology. $\WslantH$ has the advantage that it admits product structures, notably a Pontrjagin product, and is functorial under various geometric operations, which Alves and Meiwes ingeniously combine to find examples such that $\WslantH(W,L)$ has exponential growth. Then they construct a $\WslantH(W,L)$-module structure on the wrapped Floer homology $\WslantH(W,L\to L')$, whose generators are Reeb chords from $L$ to nearby Lagrangians $L'$ to find positive volume growth and thus positive topological entropy.

In our time-dependent case we did not succeed to prove Theorem~\ref{poscont} by working with $\WslantH$ alone. Instead, we also work with time-dependent Lagrangian Rabinowitz--Floer homology (abbreviated by $\TH$), which is Lagrangian Rabinowitz--Floer homology based on a time-dependent Hamiltonian and whose generators correspond to time-dependent Reeb chords from $L$ to itself, see Section~\ref{sec:tlrfh} for the definition. We use $\TH$ because so far there seems to be no wrapped Floer homology that encodes time-dependent Reeb dynamics in a transparent way. The problem with $\WslantH$ is that the main tool for understanding the homology are radial Hamiltonians, whose radial coordinate explicitly corresponds to the slope and thus to the length of Reeb chord. But for time-dependent Hamiltonians this correspondence breaks down since time-dependent Hamiltonians are not constant along their chords. 

In $\TH$, however, the information of the length of a chord and its radial position are decoupled, thus the loss of radial control does not affect the understanding of the dynamics. On the other hand, it seems hard to set up a Pontrjagin product structure on $\TH$ or a $\TH(W,L)$-module structure on the homology $\TH(W,L\to L')$ whose generators correspond to time-dependent Reeb chords from $L$ to $L'$. Our solution is to combine the advantages of the two theories: We show that the growth of $\WslantH$, which was obtained through algebraic structures by Alves and Meiwes, implies growth of $\TH$, which then can be used to count the chords of a time-dependent Hamiltonian. 

The transition to counting chords between different Lagrangians $L, L'$ is then performed inside the action functional for $\TH$, capitalizing on the fact that in $\TH$ we can encode geometric information directly in the Hamiltonian. Thus, we do not need a module structure to deduce volume growth from growth of $\TH$.  

To relate $\WslantH$ to TH, we use various intermediate homologies, closely following~\cite{CFO10}. As a first step we use V-shaped wrapped Floer homology $\WcheckH$ (in the language of~\cite{CO17} the wrapped Floer homology of the trivial Lagrangian cobordism). An alternative approach would be to follow~\cite{CFO10} and to elaborate a long exact sequence connecting wrapped Floer homology, wrapped Floer cohomology and the Morse cohomology of Lagrangians and Legendrians. However, since we are only interested in the asymptotic behavior of the homology, it is enough to relate the positive parts of the homologies, which is shorter since we do not need to consider cohomology. 

\subsubsection*{Propositions along the way} The following is a list of our results that combine to the proof of Theorem~\ref{poscont}. The individual results might be of independent interest.

\begin{proposition}\label{acuteischeck}
	Under Assumption~\ref{f=0}, for all $a,b\notin\Scal$ with $0<a<b$ we have
	\begin{equation*}
		\WslantH^{(a,b)}(W,L)\cong\WcheckH^{(a,b)}(W,L),
	\end{equation*}
	where $\Scal$ is the set of lengths of Reeb chords from $L$ to $L$. These isomorphisms commute with morphisms induced by inclusion of filtered chain complexes.
\end{proposition}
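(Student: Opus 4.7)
Both $\WslantH(W,L)$ and $\WcheckH(W,L)$ are defined as direct limits over cofinal families of admissible Hamiltonians, and the plan is to compare the two filtered chain complexes in the positive window $(a,b)$ via compatible families, following the scheme of Cieliebak--Frauenfelder--Oancea~\cite{CFO10} adapted to the Lagrangian setting. I would first fix a cofinal family $H_\mu$ of wrapped admissible Hamiltonians depending only on the radial coordinate $s$ on the conical end: a small negative constant on $W$, a convex interpolation near $s=1$, and slope $\mu$ on $\{s\geq 1+\epsilon\}$. In parallel I would fix a cofinal V-shaped family $K_\mu$ with slope $-\mu$ on a collar to the left of the boundary, slope $+\mu$ on the outer end, and coinciding with $H_\mu$ on $\{s\geq 1+\epsilon\}$.

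The next step is to classify the generators of the two chain complexes in the action window $(a,b)$. On the conical end, Hamiltonian chords with boundary on $L$ sit at radii where $h'(s)$ equals some Reeb chord length of $\Lambda$, with action approximately $sh'(s)-h(s)$; for $\mu>b$ and suitably sharp smoothing, the generators of both $H_\mu$ and $K_\mu$ with action in $(a,b)$ lie entirely on the outer arm $\{s>1\}$ and are in canonical bijection with Reeb chords from $\Lambda$ to itself of length in $(a,b)$. The remaining generators of $K_\mu$---Hamiltonian chords on the inner arm together with intersections of $L$ with itself---have action bounded above by a constant independent of $\mu$, and by adjusting the constants in $H_\mu$ and $K_\mu$ these can be pushed below the level $a$. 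The same bound applies to the interior Morse critical points contributing to $H_\mu$. Thus the subquotient of $K_\mu$ corresponding to the window $(a,b)$ has the same set of generators as that of $H_\mu$, namely the Reeb chords of length in $(a,b)$.

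The final step is to show that the Floer differentials also agree. I would set up a monotone Floer continuation between $K_\mu$ and $H_\mu$, which is admissible since the two Hamiltonians coincide outside a compact set. By the integrated maximum principle for Lagrangian Floer strips with boundary on the conical extension of $L$, any Floer strip connecting two generators of action in $(a,b)$ is trapped in the region $\{s\geq 1+\epsilon\}$ where $K_\mu=H_\mu$, so the continuation map restricts to the identity on generators and intertwines the two differentials. Passing to the direct limit $\mu\to\infty$ yields the filtered isomorphism $\WslantH^{(a,b)}(W,L)\cong\WcheckH^{(a,b)}(W,L)$; compatibility with inclusion-induced morphisms is automatic, since the same continuation map is used uniformly across windows. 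The main technical obstacle is to establish the integrated maximum principle in the Lagrangian setting: the Hamiltonian and almost complex structure must be chosen so that no Floer strip with boundary on $L$ escapes the region $\{s\geq 1+\epsilon\}$, which exploits the conical shape of $L$, the condition $\lambda|_L=0$ from Assumption~\ref{f=0}, and convexity of the Hamiltonian slope. The hypothesis $a,b\notin\Scal$ is used to ensure that no generator of either complex lies exactly on the boundary of the window, so that the bijection of generators and the passage to the limit are well-defined.
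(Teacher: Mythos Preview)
Your overall strategy---choosing cofinal radial families for the two theories that agree on the outer cone, classifying generators in the positive action window, and comparing via a monotone continuation---is exactly the route the paper takes, and your classification of generators is essentially the paper's: only outer-arm Reeb chords survive in the window $(a,b)$ for both complexes.

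There is, however, a genuine gap at the step where you pass from equality of generators to equality of differentials. You write that the integrated maximum principle traps every Floer strip between generators of action in $(a,b)$ inside $\{s\geq 1+\epsilon\}$, so that the continuation map is literally the identity. The maximum principle for Floer strips with conical Lagrangian boundary yields an \emph{upper} bound on the radial coordinate (strips do not escape to infinity); it gives no \emph{lower} bound. A Floer strip---for the differential or for the continuation equation---connecting two generators sitting at $r>1$ can and in general does dip into the region $\{r<1\}$ where $H_\mu$ and $K_\mu$ differ. There is no convexity barrier from below, and the condition $\lambda|_L=0$ does not help here. So the claim that the continuation map is the identity and that the two differentials coincide is not justified.

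The paper avoids this issue entirely and argues more softly. Since the monotone homotopy from $H_\mu$ to $K_\mu$ is \emph{constant in~$s$ on $\{r\geq 1\}$}, the constant strip at each type-(V) generator $x$ is a rigid solution of the continuation equation. Monotonicity of the homotopy forces every other continuation trajectory emanating from~$x$ to land on a generator of strictly smaller action. Hence on the quotient complex $\FC^{(a,b)}$ the continuation chain map is lower triangular with identity diagonal, and therefore an isomorphism. No confinement of Floer strips is needed. If you replace your trapping claim by this lower-triangular argument, the rest of your outline---including the compatibility with inclusion morphisms, which follows because the \emph{same} continuation map is used for nested windows---goes through and matches the paper's proof.
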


The V-shaped wrapped Floer homology $\WcheckH$ can be identified with the standard Rabinowitz--Floer homology $\AH$ of $(W,L)$ (Here the A stands for autonomous) through Rabinowitz--Floer homologies with perturbed Lagrange multiplier. This identification is analogous to the long exact sequence connecting symplectic homology and closed string Rabinowitz--Floer homology discovered in~\cite{CFO10}:

\begin{proposition}\label{alrfhvwh}
	Under Assumption~\ref{f=0}, for all $a,b\notin\Scal$ with $-\infty<a<b<\infty$ we have
	\begin{equation*}
		\WcheckH^{(a,b)}(W,L)\cong \AH^{(a,b)}(W,L).
	\end{equation*}
	These isomorphisms commute with morphisms induced by inclusion of filtered chain complexes.
\end{proposition}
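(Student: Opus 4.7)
The plan is to adapt the Cieliebak--Frauenfelder--Oancea strategy \cite{CFO10} of Rabinowitz--Floer homology with perturbed Lagrange multiplier from the closed-string to the Lagrangian setting, and to refine it so that it yields the filtered statement. Recall that $\AH(W,L)$ is defined from a Rabinowitz action functional of the form
\begin{equation*}
\Acal^F(u,\eta) \;=\; \int_0^1 u^*\lambda \;-\; \eta\int_0^1 F(u(t))\,dt
\end{equation*}
on paths $u\colon[0,1]\to\hat W$ with endpoints on $L$ and Lagrange multiplier $\eta\in\RR$, where $F$ is a defining Hamiltonian for $M$. The nonconstant critical points are Reeb chords of $\Lambda$ of length $|\eta|$, and the critical value equals that length. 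By contrast, $\WcheckH(W,L)$ is computed from a V-shaped autonomous Hamiltonian $H^V$ whose Hamiltonian chords with endpoints on $L$ correspond to constants in the interior together with reparametrized Reeb chords in the V-profile, with action tracking the Reeb chord length up to a controlled correction.

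The first step is to introduce a one-parameter family of action functionals
\begin{equation*}
\Acal_s(u,\eta) \;=\; \int_0^1 u^*\lambda \;-\; \int_0^1 H_s(u(t))\,dt \;-\; \chi_s(\eta)\int_0^1 F(u(t))\,dt,\qquad s\in[0,1],
\end{equation*}
interpolating between $\Acal^F$ at $s=0$ (with $H_0\equiv 0$ and $\chi_0(\eta)=\eta$) and a functional at $s=1$ whose critical equation matches Hamilton's equation for $H^V$ on paths with endpoints on $L$ (with $H_1=H^V$ and $\chi_1$ a compactly supported bump). This is the open-string version of the perturbed Lagrange-multiplier deformation of \cite{CFO10}; the profiles $H_s$ and $\chi_s$ are chosen as in the closed-string case, with the Lagrangian boundary conditions imposed throughout. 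The asymptotically conical condition on $L$ and the Legendrian condition on $\Lambda=\partial L$ make these boundary conditions compatible with the cylindrical end of~$\hat W$.

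The second step is a continuation argument at the level of filtered chain complexes. The analytic backbone consists of uniform $C^0$-bounds on the corresponding Floer strips, obtained from the maximum principle applied to the radial coordinate in the cylindrical end, together with action estimates confining continuation trajectories to a slightly enlarged window. The conditions $\lambda|_L=0$ and $[\omega]|_{\pi_2(W,L)}=0$ from Assumption~\ref{f=0} ensure that the action is well defined on generators (no ambiguity from cappings) and that bubbling is excluded. Because $a,b\notin\Scal$, a uniform gap estimate keeps the spectrum of $\Acal_s$ bounded away from $\{a,b\}$ for all $s\in[0,1]$, so the induced continuation maps are filtered chain isomorphisms. Naturality of continuation with respect to subcomplex inclusions gives commutativity with the morphisms induced by inclusion of filtered chain complexes.

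The final step is the identification of the two endpoints: at $s=0$ the homology of $\Acal_s$ in the window $(a,b)$ is by definition $\AH^{(a,b)}(W,L)$, and at $s=1$ a direct matching of critical points with chords of $H^V$, and of their actions with Reeb chord lengths up to the controlled correction built into the V-shape, identifies it with $\WcheckH^{(a,b)}(W,L)$. The main obstacle I expect is the action control during continuation: one must ensure both that no critical value of $\Acal_s$ drifts in or out of $(a,b)$, and that no continuation trajectory develops unbounded action as $s$ varies. The first is handled by $a,b\notin\Scal$ via the uniform gap estimate; the second requires a careful identity for the $s$-derivative of $\Acal_s$ along trajectories, combined with the maximum principle in the cylindrical end. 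A secondary subtlety is the bookkeeping translating between the Lagrange multiplier $\eta$ and the slope of $H^V$, which is exactly what the perturbed-Lagrange-multiplier formalism is designed to handle.
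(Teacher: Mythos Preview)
Your overall strategy is the same as the paper's: adapt the Cieliebak--Frauenfelder--Oancea perturbed-Lagrange-multiplier deformation to the Lagrangian setting, establish $C^0$-compactness via the maximum principle in the cylindrical end and exclude bubbling using $[\omega]|_{\pi_2(W,L)}=0$, and use $a,b\notin\Scal$ to prevent spectrum crossing. The paper explicitly says the argument is that of \cite{CFO10} with only these two analytic modifications.

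That said, your proposal has two concrete technical gaps that the paper's version handles and that you should not gloss over. First, the interpolating functional in \cite{CFO10} (and in the paper) has the form
\[
\pacal_{H,\alpha,\beta}(x,\eta)=\int x^*\lambda-\alpha(\eta)\!\int H(x)\,dt+\beta(\eta),
\]
with both a multiplicative $\alpha(\eta)$ and an \emph{additive} Morse function $\beta(\eta)$. The $\beta$-term is what makes the $\eta$-direction nondegenerate at the $\WcheckH$ endpoint, where $\alpha\equiv 1$ and the Floer equation must decouple into the equation for $H^V$ and a Morse flow in $\eta$. Your functional $\int u^*\lambda-\int H_s-\chi_s(\eta)\int F$ lacks this additive term; with $\chi_1$ a compactly supported bump the $\eta$-direction is flat outside the support, so $\Acal_1$ is not Morse--Bott and does not compute $\WcheckH^{(a,b)}$. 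Second, you mention the maximum principle for $C^0$-bounds on $u$, but the separate $L^\infty$-bound on the Lagrange multiplier $\eta$ along continuation trajectories is a distinct and nontrivial issue in Rabinowitz--Floer theory. This is precisely why the paper's deformation begins (Step~1) by replacing $\alpha(\eta)=\eta$ with a function constant for $|\eta|$ large; without this, unbounded $\eta$ can spoil compactness. Your ``careful identity for the $s$-derivative of $\Acal_s$'' does not by itself give this bound. Both issues are easily fixed by following the six-step deformation of \cite{CFO10} more literally, but as written your single-parameter interpolation would fail.
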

When combined, these two theorems imply that the positive part of $\WslantH$ coincides with the positive part of AH in a way that preserves the filtration.

%

The dimension of the positive part of AH is a lower bound to the number of Reeb chords from $\Lambda$ to $\Lambda$. We now deform the action functional of $\AH$ to the one of time-dependent Rabinowitz--Floer homology TH in order to count time-dependent Reeb chords from $\Lambda$ to $\Lambda$. We show that monotone deformations do not decrease the growth of the dimension of filtered homology groups, and by a sandwiching argument we then show\,:

\begin{proposition}[Preservation of positivity of growth]\label{continuation}
	Let $(W,L)$ and $h^t$ be as in Assumption~\ref{classofh} below, which is analogous to Assumption~\ref{f=0} in the new setup. 
	If the exponential dimensional growth of $\TH^{(0,T)}(h^t)$ is positive, then the exponential dimensional growth of $\TH^{(0,T)}(\tiilde h^t)$ is also positive for every other $\tiilde h^t$ that satisfies Assumption~\ref{classofh}.
	
	Quantitatively, if $c\leq h^t\leq C$, then the exponential dimensional growth $\gamma$ of $\TH^{(0,T)}(h^t)$ satisfies $c\Gamma^{\rm symp}(W,L)\leq \gamma\leq C\Gamma^{\rm symp}(W,L)$.
\end{proposition}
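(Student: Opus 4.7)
The plan is to establish the quantitative bounds via a sandwich argument; preservation of positivity follows immediately as a corollary. The sandwich rests on two ingredients: (i) for a constant positive Hamiltonian $\mu$, a direct identification of $\TH^{(0,T)}(\mu)$ with a linearly rescaled autonomous Rabinowitz-Floer homology $\AH^{(0,\mu T)}(W,L)$, and (ii) monotone continuation morphisms relating $\TH(h^t)$ to $\TH$ of its constant lower and upper bounds.

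For Step (i), when $h^t \equiv \mu>0$ the critical point equation defining $\TH(\mu)$ becomes $\dot u = \mu\eta R$; the substitution $\eta \mapsto \eta/\mu$ converts it into the standard autonomous Rabinowitz-Floer critical point equation, while the arrival-time filtration in $\TH$ becomes the $\AH$-filtration by chord length rescaled by the factor $\mu$. This yields an identification $\TH^{(0,T)}(\mu) \cong \AH^{(0,\mu T)}(W,L)$. Combined with the filtration-preserving isomorphisms of Propositions~\ref{acuteischeck} and~\ref{alrfhvwh}, which together identify the positive part of $\AH$ with $\WslantH$, we obtain
\begin{equation*}
    \Gamma\bigl(\dim \TH^{(0,T)}(\mu)\bigr) \;=\; \mu\, \Gamma^{\rm symp}(W,L).
\end{equation*}

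For Step (ii), given Hamiltonians $h_0^t \leq h_1^t$ both satisfying Assumption~\ref{classofh}, I would choose a monotone homotopy $h_s^t$ between them and construct the associated Floer continuation chain map $\TH^{(0,T)}(h_0^t) \to \TH^{(0,T)}(h_1^t)$. The essential technical input is a priori control of the Lagrange multiplier $\eta$ along continuation trajectories, uniform in the homotopy parameter $s$; this is precisely the purpose of Assumption~\ref{classofh}. Applied to the pointwise inequalities $c \leq h^t \leq C$, viewing $c$ and $C$ as constant Hamiltonians, these maps fit into a chain
\begin{equation*}
    \TH(c) \to \TH(h^t) \to \TH(C)
\end{equation*}
whose composition on homology coincides, up to a controlled shift of action window, with the natural continuation $\TH(c) \to \TH(C)$. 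Comparing dimensions along this chain and taking $\limsup$'s as $T \to \infty$, together with the identification in Step~(i), sandwiches the exponential growth $\gamma$ of $\dim \TH^{(0,T)}(h^t)$ between $c\,\Gamma^{\rm symp}(W,L)$ and $C\,\Gamma^{\rm symp}(W,L)$.

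The main obstacle will be Step~(ii). Unlike in symplectic homology, Rabinowitz-Floer theory requires uniform bounds on the Lagrange multiplier along every Floer cylinder, and such bounds are more delicate for time-dependent monotone homotopies than in the autonomous case. Once Assumption~\ref{classofh} is shown to furnish the necessary a priori estimates, the rest of the argument reduces to the standard continuation formalism combined with the two preceding propositions of the paper.
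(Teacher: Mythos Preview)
Your overall sandwich strategy is exactly the paper's, but Step~(i) as written has a genuine gap: a literal constant Hamiltonian $\mu\neq 1$ does \emph{not} satisfy Assumption~\ref{classofh}, which requires $h^t\equiv 1$ for $t\leq 0$. This normalization is not cosmetic. It is what makes Proposition~\ref{monotonicity} work: in its proof the case $\eta_s\leq 0$ relies on $h_+^{\eta t}=h_-^{\eta t}=1$ so that the extra term vanishes identically. It is also what guarantees, in the proof of Proposition~\ref{continuation}, that the two chain complexes agree exactly on the non-positive part, so that the continuation map descends to $\TC_+^T$. If you sandwich by the genuine constants $c$ and $C$, neither of these arguments is available and your chain $\TH(c)\to\TH(h^t)\to\TH(C)$ need not respect the action filtration for $\eta<0$.

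The paper's fix is to replace each constant $c$ by a time-reparametrized Reeb Hamiltonian $h_c^t$ that equals $1$ near every integer $t$ and satisfies $\varphi^1_{h_c^t}=\varphi^c_1$; one can arrange $h_c^t\leq h^t\leq h_C^t$ whenever $c\leq h^t\leq C$. Your rescaling $\eta\mapsto\eta/\mu$ is then replaced by a deformation of $h_c^t$ back to the constant $1$ through reparametrizations, during which one tracks the action window via a monotone bijection $\tau_c$ with $\tau_c(ck)=k$; this yields $\iota_*\TH_+^{\tau_c(T)}(h_c^t)\cong\iota_*\TH_+^T(1)$ and hence exponential growth $c\,\Gamma^{\rm symp}(W,L)$ for $h_c^t$. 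A second point you gloss over: the monotone continuation map is only known to be an isomorphism on $\TH_+^\infty$, not on each finite window, so the dimension comparison should be made for the images $\iota_*\TH_+^T$ in the full homology rather than for $\TH_+^T$ itself. Finally, the a priori $\eta$-bounds you flag as the main obstacle are already packaged into Proposition~\ref{monotonicity}; the actual work in this proof is the construction of $h_c^t$ and the window-tracking argument.
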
	

Finally, to find positive topological entropy, we need to count the chords of our positive path of contactomorphisms between {\it different} Legendrians. The following proposition is established again by deforming the functional.

\begin{proposition}\label{changeLegendrian}
	Let $(W,L)$ and $h^t$ be as in Assumption~\ref{classofh}. Suppose that $\Gamma^{\rm symp}(W,L)>0$. Let $\Lambda'$ be a Legendrian that is isotopic through Legendrians to $\Lambda=\partial L$. Then the number of $\varphi^t$-chords from $\Lambda$ to $\Lambda'$ of length $\leq T$ grows exponentially. 
	
	Quantitatively, let $\psi$ be a contactomorphism that takes $\Lambda$ to $\Lambda'$ so that $(\psi^{-1})^*\alpha=f\alpha$. Then the exponential growth of the number of $\varphi^t$-chords from $\Lambda$ to $\Lambda'$ of length $\leq T$ is at least $\min f \cdot\min h^t\cdot\Gamma^{\rm symp}(W,L)$.
\end{proposition}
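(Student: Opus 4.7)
The plan is to adapt the monotone continuation argument of Proposition~\ref{continuation} by deforming one Lagrangian boundary condition rather than the Hamiltonian. I would define a variant $\TH^{(0,T)}_{L,L'}(h^t)$ of time-dependent Lagrangian Rabinowitz--Floer homology whose generators are precisely $\varphi^t$-chords from $\Lambda$ to $\Lambda'$, and compare it to $\TH^{(0,T)}(h^t)=\TH^{(0,T)}_{L,L}(h^t)$ via a continuation map induced by a Lagrangian isotopy from $L$ to an asymptotically conical exact Lagrangian $L'$ with $\partial L'=\Lambda'$. The factor $\min f$ in the target bound will emerge from the filtration shift of this continuation.

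To construct $L'$, I would extend the Legendrian isotopy between $\Lambda$ and $\Lambda'$ to a contact isotopy of $M$ and lift to $\widehat W$, producing a smooth family $L_s$ of asymptotically conical exact Lagrangians with $L_0=L$ and $L_1=L'$. A natural choice arranges that, in the cylindrical end $M\times\RR_+$ of $\widehat W$, the Lagrangian $L'$ agrees with the image of $L$ under the exact symplectomorphism $\Psi(x,r)=(\psi(x),\,f(\psi(x))\,r)$ covering $\psi$; this is the lift characterised by $\Psi^{*}(r\alpha)=r\alpha$. The family of Rabinowitz-type functionals $\mathcal{A}^{h^t}_{L,L_s}$ then interpolates between the two theories, its critical points at $s=1$ being precisely the chords from $\Lambda$ to $\Lambda'$, and it induces the continuation map of filtered Floer homologies.

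The crucial quantitative input is that under this continuation a filtration level $T$ in $\TH^{(0,T)}_{L,L}$ corresponds, up to a bounded additive correction absorbed by the compactly supported part of the isotopy, to filtration level $T/\min f$ in $\TH^{(0,\cdot)}_{L,L'}$. This follows because the Liouville coordinate (which encodes Reeb-length in the TH action functional) is rescaled by $\Psi$ by a factor of at most $1/\min f$ near infinity, where the dominant action contribution lies. Combining with the lower bound of Proposition~\ref{continuation} applied to $\TH^{(0,T)}(h^t)$ gives $\dim\TH^{(0,T)}_{L,L'}(h^t)\gtrsim e^{\,\min f\cdot\min h^t\cdot\Gamma^{\rm symp}(W,L)\cdot T}$, which translates to the desired growth of $\varphi^t$-chords from $\Lambda$ to $\Lambda'$ of length $\leq T$.

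The main obstacle is achieving the sharp constant $\min f$. The qualitative continuation isomorphism is essentially standard once $L'$ is set up correctly, but extracting the optimal shift requires a careful choice of Lagrangian isotopy — essentially $\Psi$ in the cylindrical end together with a compactly supported modification elsewhere — so that its Hofer-type energy is controlled precisely by $\min f$. Verifying the action estimates for Floer trajectories of the mixed-boundary problem, and transferring the $L^\infty$ and compactness results from the standard TH setup developed earlier in the paper, should then complete the argument.
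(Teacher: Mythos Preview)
Your approach is plausible but takes a genuinely different route from the paper. You propose to build a mixed-boundary theory $\TH_{L,L'}$ and compare it to $\TH_{L,L}$ via a continuation map induced by a Lagrangian isotopy $L_s$, with the constant $\min f$ arising from the radial rescaling of the lift $\Psi$. The paper instead avoids introducing a second Lagrangian in the Floer setup altogether: it encodes the passage from $\Lambda$ to $\Lambda'$ entirely in the contact Hamiltonian. Concretely, it conjugates the flow by $\psi^{-1}$, so that $\varphi^t$-chords from $\Lambda$ to $\Lambda'$ become $\tilde\varphi^t$-chords (with $\tilde\varphi^t=\psi^{-1}\varphi^t\psi$) from $\psi^{-1}\Lambda$ to $\Lambda$, and then prepends a segment generated by the contact isotopy $(\psi^t)^{-1}$ on $[0,1]$ so that the concatenated path starts and ends on $\Lambda$. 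After making this concatenated Hamiltonian $\overline h{}^t$ positive and $\geq\tilde h^t$ (by choosing a large reparametrisation factor $k$), the paper applies the already-established Propositions~\ref{monotonicity} and~\ref{continuation} verbatim. The constant $\min f$ falls out of the one-line identity $\tilde h^t(x)=f(\psi x)\,h^t(\psi x)$, so $\min\tilde h^t\geq\min f\cdot\min h^t$.

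What each approach buys: the paper's trick stays inside the single-Lagrangian framework, so no new analysis is needed and the sharp constant is immediate. Your route is conceptually natural but requires you to set up $\TH_{L,L'}$ from scratch (the paper explicitly flags, in the Remark after Example~\ref{exampl}, that with two Lagrangians one generally cannot arrange $\lambda|_{L_i}=0$ simultaneously, so the action has an extra boundary term and constant orbits are no longer separated by action), to develop Lagrangian-isotopy continuation maps in the time-dependent Rabinowitz setting with the requisite $L^\infty$ bounds, and to prove an action-shift estimate sharp enough to yield $\min f$ rather than some weaker constant depending on the whole isotopy. None of these steps is in the paper, and the last one---which you correctly identify as the main obstacle---is where your sketch is thinnest: a Hofer-type energy of the Lagrangian isotopy will typically give only an additive shift, not the multiplicative factor $1/\min f$ you claim, so you would need a more careful argument (e.g.\ pulling back by $\Psi$ globally and tracking how the functional transforms) to recover the stated bound.
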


\subsection*{Organization of the paper} 
	In Section~\ref{sec:wfh} we describe the general geometric setup and the Floer homology of an action functional in a generality that suffices for this paper. In Subsections~\ref{sec:wh} and~\ref{sec:vwh} we describe wrapped Floer homology and V-shaped wrapped Floer homology and show that their positive parts coincide (Proposition~\ref{acuteischeck}). 
	
	In Section~\ref{sec:lrfh} we discuss the results concerning Rabinowitz--Floer homology. In Subsection~\ref{sec:alrfh} we define the standard autonomous Rabinowitz--Floer homology AH. In Subsection~\ref{sec:sequence} we show that AH is isomorphic to V-shaped wrapped Floer homology $\WcheckH$(Proposition~\ref{alrfhvwh}) by introducing perturbed Rabinowitz--Floer homology. In Subsection~\ref{sec:tlrfh} we introduce time-dependent Rabinowitz--Floer homology TH and show how the homological growth changes under the change of the dynamics (Propositions~\ref{continuation} and~\ref{changeLegendrian}).
	
	In Section~\ref{sec:proof} we puzzle together all these results to prove Theorem~\ref{poscont}.

\subsection*{Acknowledgments}
	I wish to thank Marcelo Alves, Matthias Meiwes, Peter Albers and Felix Schlenk for help and advice. This work is supported by SNF grant 200021-163419/1.

\section{Wrapped Floer Homology}\label{sec:wfh}

In this section we explain the theorems concerning wrapped Floer homologies. We begin with an exposition of the geometric setup, where we explain the terms in Assumption~\ref{f=0} and justify these assumptions. Then we outline the construction of Lagrangian Floer homology in a general setting. 

In the two following subsections we present wrapped Floer homology and V-shaped wrapped Floer homology and show that their positive parts coincide. In all versions we use $\ZZ_2$-coefficients and no grading. The wrapped Floer homology we present here was introduced in~\cite{AS10} and coincides with the version in~\cite{AM17}. For the entire section we follow~\cite{CFO10}, where analogous results for symplectic homology were established.

\subsubsection*{Liouville domains} A Liouville domain $(W,\omega,\lambda)$ is a compact manifold $W$ with boundary $\partial W=M$ endowed with an exact symplectic form $\omega=d\lambda$ and a choice of primitive $\lambda$ such that the so-called Liouville vector field $Y$ defined by $\iota_Y\omega=\lambda$ is transverse to the boundary, pointing outwards. Then $(\partial W=M,\xi=\ker\alpha)$, where $\alpha=\lambda|_M$, is a contact manifold. Let $\haat W=W\cup_M M\times[1,\infty)_r$ be the completion of $W$. The symplectization $(\haat M=M\times\RR^{>0},d(r\alpha))$ embeds into $\haat W$ such that $M\times\{1\}=M$, such that $\lambda=r\alpha$ and such that the Liouville vector field coincides with $r\partial_r$ on $\haat M$. 

\begin{example}\label{example}
	A starshaped domain $(D,dy\wedge dx,\frac12(y\,dx-x\,dy))$ in $\RR^{2n}$ is a Liouville domain with completion $\RR^{2n}$ and Liouville vector field $x\partial_x+y\partial_y$.

	Similarly, the sublevel $(D^*Q,dp\wedge dq, p\,dq)$ of a fiberwise starshaped hypersurface of $T^*Q$ is a Liouville domain with completion $T^*Q$ and Liouville vector field $p\,\partial_p$.
\end{example}

\subsubsection*{Asymptotically conical exact Lagrangians}
Let $L\subset W$ be a Lagrangian submanifold with Legendrian boundary $\partial L=\Lambda\subset M$. We say that $L$ is conical in a set $U\subset W$ if the Liouville vector field is tangent to $L\cap U$. We assume that
\begin{itemize}
\item $L$ is exact, i.e.\ $\lambda|L=df$ for some function $f:L\to \RR$,
\item $L$ is asymptotically conical, i.e.\ $L$ is conical in $M\times [1-\varepsilon,1]$ for $\varepsilon>0$ small enough.
\end{itemize}
An exact asymptotically conical Lagrangian satisfies $L\cap (M\times [1-\varepsilon,1])=\Lambda\times [1-\varepsilon,1]$ for $\varepsilon>0$ small enough. Since $\Lambda$ is Legendrian and $\lambda$ vanishes along $\partial_r$, $\lambda|_L$ vanishes in the region where $L$ coincides with $\Lambda\times [1-\varepsilon,1]$, and hence $f$ is locally constant in this region. Thus one can extend an asymptotically conical exact Lagrangian $L$ to an exact Lagrangian $\haat L=L\cup_\Lambda(\Lambda\times[1,\infty))$ in $\haat M$ by extending $f$ locally constantly. We will also refer to $\haat L$ as asymptotically conical.

Later, we are mainly interested in the case where $\Lambda$ is a sphere, so we will assume throughout that $\Lambda$ is connected. We can modify the Liouville domain such that $\lambda|_L=0$ as follows. For connected $\Lambda$ and with $\lambda|_L=df$, we can change $f$ by a constant such that $f\equiv 0$ on a collar neighborhood of $\haat W\backslash W$. Extend $f$ to a function $F$ on $\haat W$ with support inside $W$ minus a collar neighborhood of the boundary, and then add $-dF$ to $\lambda$. With respect to this new $\lambda$ the Lagrangian is still exact with $\lambda|_{L}\equiv 0$. This changes $\lambda$ in the interior of $W$, but not on the boundary $\partial W=M$, and thus also the Reeb flow on $M$, in which we are ultimately interested, is unchanged.

\begin{example}[Continuation of Example~\ref{example}]\label{exampl}
Any Lagrangian plane through 0 is an (asymptotically) conical Lagrangian in $(\RR^{2n},dy\wedge dx,\frac12(ydx-xdy))$.

A cotangent fiber $T^*_qQ$ is an (asymptotically) conical Lagrangian in $T^*Q$.
\end{example}

\begin{remark}
	If $L_1,L_2$ are two asymptotically conical exact Lagrangians that intersect, then we can in general not change $\lambda$ such that Assumption~\ref{f=0} holds for both Lagrangians simultaneously. As a result there is an additional term $+[f_1(x(0))-f_2(x(1))]$ in the action functional $\Acal_H$ defined below. Therefore the intersection of the Lagrangians, that later on will correspond to constant orbits, will not have zero action and thus one cannot separate the constant orbits from the others by action. The subsequent results should also be valid for a pair of Lagrangians, modulo finite dimensional terms stemming from the impossibility of separating different kinds of orbits. These terms do not influence the asymptotic behavior of the homology. In this paper, however, we take a different approach and consider pairs of Lagrangians only in the proof of Proposition~\ref{changeLegendrian}, where we use a trick to detect the chords between $L_1$ and $L_2$ in the space of paths from $L_1$ to $L_1$.
\end{remark}

\subsubsection*{Path space, Reeb chords and regularity}
For an asymptotically conical exact Lagrangian $L$ in $W$ we denote by $\Pcal(L)$ the space of smooth paths $x:[0,1]\to\haat W$ from $\haat L$ to $\haat L$ (i.e.\ $x(0),x(1)\in\haat L$). Denote by $R_\alpha$ the Reeb vector field of $\alpha$ on $M$ and by $\varphi_\alpha^t$ its flow. A {\it Reeb chord of length $T$} from $\Lambda$ to $\Lambda$ is a path $\gamma:[0,1]\to M$ such that $\dot\gamma=TR_\alpha$, where by length we mean the time it takes the Reeb flow to run through the cord. We call a Reeb chord of length $T$ {\it transverse} if the subspaces $T_{\gamma(1)}(\varphi_\alpha^T(\Lambda))$ and $T_{\gamma(1)}\Lambda$ of $T_{\gamma(1)}M$ intersect only in the origin. Note that the constant maps $t\mapsto x\in\Lambda$, which are Reeb chords of length 0, are never transverse. The spectrum of $(M,\alpha,\Lambda)$ is the set $\Scal(M,\alpha,\Lambda)$ ($\Scal$ for short) of lengths of Reeb chords from $\Lambda$ to $\Lambda$, including negative lengths for ``backward'' Reeb flows. This set is nowhere dense in $\RR$. 

Given a contact manifold $(M,\xi)$, the pair $(\alpha,\Lambda)$ consisting of a contact form $\alpha$ for $\xi$ and a Legendrian submanifold $\Lambda$ is called \textit{regular} if all nonconstant Reeb chords of $\alpha$ from $\Lambda$ to $\Lambda$ are transverse. Given a Liouville domain $(W,\omega,\lambda)$, the pair $(\lambda, L)$ consisting of the Liouville form and an asymptotically conical exact Lagrangian is called \textit{regular} if $(\lambda|_{M=\partial W},\Lambda=L\cap M)$ is regular.

\subsubsection*{Discussion of Assumption~\ref{f=0}}
Examples~\ref{example},~\ref{exampl} are natural examples of Liouville domains with asymptotically conical exact Lagrangians with spherical boundary. We have also seen that for $(W,L)$ with $\partial L$ connected, we can modify the Liouville form $\lambda$ such that $\lambda|_L=0$ without changing the Reeb dynamics of $\lambda|_{\partial W}$ on $\partial W$. 

If $\pi_1(L)=0$, then the assumption $[\omega]|_{\pi_2(W,L)}=0$ holds automatically since any disk $D$ with boundary on $L$ can be completed by a disk in $L$ to a sphere $S$ such that $\int_D\omega=\int_S\omega =\int_{\partial S}\lambda=0$ since $L$ is Lagrangian and $\omega$ is exact. If $\pi_1(L)\neq 0$, then this assumption is nontrivial. We make this assumption to prevent bubbling of holomorphic disks.

The strong assumption in \ref{f=0} is that $(\lambda,L)$ is regular. For given $L$ and generic $\lambda$ this is the case, so we can force regularity by perturbing the dynamics. If we choose not to perturb $\lambda$, then we have to face the fact that there are forms $\lambda$ that are not regular for any Lagrangian $L$. For example, the unit codisc bundle over the round sphere has periodic Reeb flow on its boundary, and thus any Legendrian gets mapped to itself after a full period, resulting in high degeneracy. Other degenerate examples are exact fillings of exactly fillable prequantization bundles, e.g.\ the subenergy level of the harmonic oscillator on $\RR^{2n}$ (the energy level $S^{2n-1}$ is a prequantization bundle over $\CC P^{n-1}$). Note that to apply Theorem~\ref{poscont}, it suffices to find positive symplectic growth for {\it one} regular pair $(\lambda,L)$. Thus, as long as one is able to guarantee positive symplectic growth, one is free to perturb $\lambda$. This is the case for the examples in~\cite{AM17}, where positive symplectic growth is guaranteed algebraically for a regular pair $(\lambda,L)$ that Alves and Meiwes construct from any regular pair $(\lambda',L')$, where $\lambda'$ restricts on the boundary to the standard contact structure and the constructed $\lambda$ restricts on the boundary to a dynamically exotic contact structure. 


\subsubsection*{Action functionals}
For a Hamiltonian $H:\haat W\to\RR$, the Hamiltonian vector field~$X_H$ is defined by $\iota_{X_H}\omega=\omega(\cdot,X_H)=dH$. Its flow is denoted by $\varphi_{H}^t$. We define the action functional $\Acal_H:\Pcal(L)\to\RR$ by
\begin{equation}
	\Acal_H(x) = \int_0^1 x^*\lambda-\int_0^1 H(x(t))\;dt.
\end{equation}
The critical points of $\Acal_H$ are Hamiltonian chords with $x(0),x(1)\in \haat L$. We denote the set of critical points by $\Crit\Acal_H$. A Hamiltonian $H$ is called {\it regular}\/ if $\haat L$ and $\varphi^1_H(\haat L)$ intersect transversely (i.e.\ all critical points of $\Acal_H$ are non-degenerate). We call the Hamiltonian {\it Morse--Bott regular}\/ if $\haat L$ and $\varphi^1_H(\haat L)$ intersect in closed manifolds such that $T(\haat L\cap \varphi^1_H(\haat L))=T\haat L\cap T\varphi^1_H(\haat L)$. Note that regular implies Morse--Bott regular. 

We will later specify the Hamiltonians we use, by imposing in particular a certain behavior at infinity. We assume throughout and without mentioning that all our Hamiltonians are regular, except if we consider Morse--Bott situations. Since the only non-regular behavior will happen at the set of constant orbits, and since we are ultimately interested in the asymptotic behavior of the homology, the Morse--Bott situation is of marginal interest and not elaborated here. For an exposition, see for example~\cite{CF09}.

\subsubsection*{Floer strips} 
An almost complex structure $J$ on $\haat W$ compatible with $\omega$ is called {\it conical}\/ at a point in $\haat M$ if it commutes with translations in the $r$-coordinate, preserves $\xi$ and sends the Reeb vector field to the Liouville vector field $J R_\alpha=r\partial_r$. Further, we call $J$ {\it asymptotically conical}\/ if $J$ is conical on $M\times[r,\infty)$ for some $r>0$. Using an asymptotically conical almost complex structure~$J$, we can define the $L^2$-metric on $\Pcal(L)$ by 
\begin{equation*}
	\langle \xi_1,\xi_2\rangle = \int_0^1\omega(\xi_1,J\xi_2)\; dt.
\end{equation*}
We interpret negative gradient flow lines $x_s(t)$ of $\Acal_H$ as Floer strips $u:\RR\times[0,1]\to\haat W$,
\begin{equation}\label{floer}
	\begin{cases}
		\partial_su+J(\partial_t u- X_H)=0,\\
		u(\cdot,i)\in\haat L,\; i=0,1.
	\end{cases}
\end{equation}
We switch between the notations $x_s(t)$ and $u(s,t)$ according to whether we wish to see this object as a negative gradient flow line or as a perturbed holomorphic curve. 

Given two critical points $x_+$ and $x_-$, we define the moduli space of parametrized Floer strips
\begin{equation*}
	\tiilde \Mcal(x_-,x_+,H,J)=\{\mbox{$x_s$ Floer strip, }\lim_{s\to\pm\infty}x_s=x_\pm\mbox{ uniformly in $t$}\}.
\end{equation*}
In the sequel we suppress $H$ and $J$ in the notation. Denote by $\tiilde \Mcal^k(x_-,x_+)$ the subset of $\tiilde \Mcal(x_-,x_+)$ on which the operator obtained by linearizing Floer's equation~(\ref{floer}) has Fredholm index $k$. 
There is an $\RR$-action on $\tiilde \Mcal(x_-,x_+)$ coming from translations on the domain in the $s$-variable. Denote the quotient by this action by
\begin{equation*}
	\Mcal^k(x_-,x_+)=\tiilde\Mcal^{k+1}(x_-,x_+)/\RR.
\end{equation*}
The {\it energy} of $u\in\tiilde\Mcal(x_-,x_+)$ is given by 
\begin{equation*}
	E(u):=\int_{-\infty}^\infty\langle\nabla\Acal_H(x_s),\nabla\Acal_H(x_s)\rangle\; ds=\Acal_H(x_-)-\Acal_H(x_+),
\end{equation*}
a quantity that is invariant under translation of the domain and thus descends to the quotient.
Since $E(u)$ is non-negative, $\Mcal(x_-,x_+)$ is empty if $\Acal_H(x_-)<\Acal_H(x_+)$.

From now on we assume that $\Mcal^k(x_-,x_+)$ is a $k$-dimensional manifold that is compact modulo breaking. Compactness modulo breaking follows from $L^{\infty}$- bounds on $u$ and its derivatives by bubbling analysis, and the manifold property follows if one can show that the set $\Mcal(x_-,x_+)$ is cut out transversally from the space of all smooth strips from $x_-$ to $x_+$ with boundary on $\haat L$. For a regular Hamiltonian with appropriate asymptotic behavior these two properties are satisfied for a generic asymptotically conical almost complex structure. For all the Floer homologies in this section these are classical facts.

For Morse--Bott regular Hamiltonians we consider moduli spaces of flow lines with cascades, where compactness modulo breaking and transversality hold for an additional generic choice of Riemannian metric on the critical manifolds. 

\subsubsection*{Floer chain complex and homology}

To define a homology, for $a\in\RR\backslash\Scal$ assume that the number of critical points of $\Acal_H$ with action less than $a$ is finite. Then we consider as chain group the free $\ZZ_2$-vector space
\begin{equation*}
	\FC^a(H,J,L)=\bigoplus_{x\in\Crit\Acal_H,\; \Acal_H(x)<a}\ZZ_2\cdot x.
\end{equation*}
We abbreviate $\FC^a:=\FC^a(H,J,L)$ and $\FC:=\FC^\infty:=\bigcup \FC^a$. We equip $\FC$ with a boundary operator $\partial:\FC\to \FC$ by counting isolated Floer strips mod $2$,
\begin{equation*}
	\partial x=\sum_{y\in\Crit\Acal_H}\#_{\ZZ_2}\Mcal^0(x,y)\cdot y.
\end{equation*}
There are only finitely many nonzero summands since Floer strips decrease in action and $\FC^a$ is finite. Every summand is well-defined since $\Mcal^0(x,y)$ is a compact 0-manifold and thus finite. The operator $\partial$ is therefore well defined. The property $\partial^2=0$ holds since broken flow lines from $x$ to $y$ via intermediate critical points form exactly the boundary $\partial\Mcal^1(x,y)$, as is seen by gluing and thus come in pairs. Hence, $(\FC,\partial)$ forms a chain complex, and we can define its homology $\FH=\ker\partial/\im\partial$.

Since Floer strips decrease in action, the boundary operator descends to a boundary operator $\partial^a$ on $\FC^a$. Further, we can define the chain complex with action window $(a,b)$, $a,b\notin\Scal$, as the quotient	$\FC^{(a,b)}=\FC^b/\FC^a$. This yields $\RR$-filtered Floer homology groups
\begin{align*}
	\FH^{a}&=\ker\partial^{a}/\im\partial^{a},\\
	\FH^{(a,b)}&=\ker\partial^{(a,b)}/\im\partial^{(a,b)}.
\end{align*}

As for all $\RR$-filtered homologies we have for $a<b<c$ long exact sequences
\begin{equation}\label{longexact}
	\ldots\to \FH^{(a,b)}\to \FH^{(a,c)}\to \FH^{(b,c)}\to \FH^{(a,b)}\to\ldots
\end{equation}
where the first two arrows are induced by inclusion of chain complexes.


In the following we investigate two classes of admissible Hamiltonians that will in a direct limit result in different versions of wrapped Floer homology. The first is the standard $\WslantH$, the second the V-shaped $\WcheckH$.

\subsection{Wrapped Floer homology}\label{sec:wh}
We begin with the classical wrapped Floer homology, as defined in~\cite{AS10}. 

\subsubsection*{Admissible Hamiltonians}
We say that a Hamiltonian $H:\haat W\to\RR$ is {\it $\WslantH$-admissible with slope $\mu>0$} if 
\begin{equation*}
	\begin{cases}
		H<0 \mbox{ on $W$,}	\\
		\exists b<-\mu\colon H(x,r)=h(r)=\mu r+b\mbox{ on $M\times[1,\infty)$.}
	\end{cases}
\label{acuteadmissible}
\end{equation*}
Denote by $\acute\Hcal$ the set of $\WslantH$-admissible regular Hamiltonians. For $H\in\acute\Hcal$, the slope~$\mu$ is not in $\Scal$, and so its orbits $x\in\Crit\Acal_H$ have image in $W$. 

If $H$ only depends on $r$ in $\haat W$ and is constant $<0$ for $r<1-\delta$, then $X_H=(\frac d{dr}H) R_{\alpha}$. Thus, the elements of $\Crit\Acal_H$ have constant $r$-coordinate and correspond to the Reeb chords in $M$ from $\Lambda$ to $\Lambda$ of period $\frac d{dr}H$ (they run backwards if $\frac d{dr}H<0$). Of course, such a Hamiltonian is not regular at points where $\frac d{dr}H=0$. This can be mended by adding a $C^2$-small Morse function supported in the region where $\frac d{dr}H$ is smaller than $\min\Scal$, which perturbs all the constant orbits and leaves the interesting Reeb orbits unchanged. 


\subsubsection*{Continuation morphisms and wrapped Floer homology}
A monotone increasing homotopy $H_s$ from $H_0$ to $H_1$ through admissible Hamiltonians induces a chain map $\FC(H_0)\to \FC(H_1)$ that decreases in action and thus restricts to a chain map $\FC^{(a,b)}(H_0)\to \FC^{(a,b)}(H_1)$ for $a,b\in\RR\cup\{\pm\infty\}\backslash\Scal$. The morphism 
\begin{equation*}\Phi_{H_s}:\FH^{(a,b)}(H_0)\to \FH^{(a,b)}(H_1)\end{equation*}
induced in homology is called {\it continuation morphism}. It is independent of the monotone homotopy $H_s$, and if $H_s$ does not depend on $s$, then $\Phi_{H_s}=id$. 

The set $\acute\Hcal$ admits a partial order where $H_0\leq H_1$ if the order is satisfied pointwise. With this partial order, $\acute\Hcal$ becomes a directed set. The set of homologies $\{\FH^{(a,b)}(H)\}$ thus forms a direct system indexed by $\acute\Hcal$. We define the wrapped Floer homology as the direct limit of this system, 
\begin{align*}
	\WslantH^{(a,b)}(W,L)=\lim_{\longrightarrow} \FH^{(a,b)}.
\end{align*}
For $a=-\infty$ we abbreviate $\WslantH^b(W,L):=\WslantH^{(-\infty,b)}(W,L)$. Since long exact sequences are preserved by direct limits, the sequence~(\ref{longexact}) holds for $\WslantH$. 

The generators of $\WslantH$ fall into two different classes: forward Reeb orbits on $M$, and ``short orbits'' on $W$. We are mainly interested in Reeb orbits of $(M,\lambda|_M)$. They are singled out by action, as the following lemma shows. 
\begin{lemma}\label{cut0off}
	In the geometric situation~(\ref{f=0}) and for positive $a\notin\Scal$ the positive part of the homology $\WslantH_+^{a}(W,L):=\WslantH^{(\varepsilon, a)}(W,L)$, where $0<\varepsilon<\min\Scal_{>0}$, is generated by the Reeb chords from $\Lambda$ to $\Lambda$ of length $<a$, and their action is given by their length.
\end{lemma}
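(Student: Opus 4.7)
The plan is to exhibit a cofinal family $\{H_{\mu,\delta,\epsilon_1}\}$ in $\acute\Hcal$ for which the generators of $\FC^{(\varepsilon,a)}$ stand in bijection with the Reeb chords of length in $(0,a)$, at action level $T$ up to arbitrarily small error, and then take the direct limit. For each $\mu > a$ with $\mu \notin \Scal$ and auxiliary parameters $\delta,\epsilon_1,\epsilon_2 > 0$ subject to a gluing condition, I would define $H = H_{\mu,\delta,\epsilon_1}$ to be (a) a $C^2$-small Morse perturbation of the constant $-\epsilon_1$ on $W\setminus (M\times[1-\delta,1])$, with the Morse function restricting to a Morse function on $L$; (b) a radial profile $h(r)$ on the collar $M\times[1-\delta,1]$ with $h'$ increasing monotonically from $0$ to $\mu$; (c) $h(r) = \mu r - \mu - \epsilon_2$ on $M\times[1,\infty)$. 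Smooth matching forces $\epsilon_1 - \epsilon_2 = \int_{1-\delta}^1 h'$; the family becomes cofinal in $\acute\Hcal$ as $\mu \to \infty$ and $\delta,\epsilon_1,\epsilon_2 \to 0^+$.

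The critical points of $\Acal_H$ split into three regimes. On the cone, $X_H = \mu R_\alpha$, and by $\mu \notin \Scal$ there are no chords from $\haat L$ to $\haat L$ there. On the interior, $X_H$ is $C^0$-small and a standard Morse--Bott perturbation argument identifies critical points with critical points of the Morse function restricted to $L$; each has action $\Acal_H(x_0) = -H(x_0) = \epsilon_1 + o(1)$, which is $< \varepsilon$ by choice of $\epsilon_1$. On the collar, $X_H = h'(r) R_\alpha$ preserves $r$, so a critical point at level $r_0 \in (1-\delta,1)$ is a Reeb chord of length $T = h'(r_0)$ from $\Lambda$ to $\Lambda$; the bijection $h'\colon [1-\delta,1] \to [0,\mu]$ yields exactly one generator per Reeb chord of length $T \in (0,\mu)$. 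Its action equals
\begin{equation*}
\Acal_H(x) \;=\; r_0 T - h(r_0) \;=\; r_0 T + \epsilon_2 + \int_{r_0}^1 h'(s)\,ds,
\end{equation*}
and monotonicity of $h'$ bounds it by $T + \epsilon_2 \le \Acal_H(x) \le T + \epsilon_2 + \mu\delta$.

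Since $\varepsilon < \min \Scal_{>0}$ and $a \notin \Scal$, once $\epsilon_2$ and $\mu\delta$ are smaller than the gap between $a$ and $\max(\Scal \cap (0,a))$, every interior generator has action in $(0,\varepsilon)$ and every Reeb chord of length $T \in \Scal \cap (0,a)$ has action in $(\varepsilon,a)$. Consequently $\FC^{(\varepsilon,a)}(H_{\mu,\delta,\epsilon_1})$ is freely generated over $\ZZ_2$ by the Reeb chords of length $<a$, with action within $\epsilon_2 + \mu\delta$ of the chord length. The continuation morphisms along the cofinal family are upper-triangular in this basis (they preserve the Reeb chord partition and are the identity on the chord summand modulo action-lower terms), so the direct limit $\WslantH^{(\varepsilon,a)}(W,L)$ has the same generators, with filtration level converging to the chord length as $\epsilon_2,\mu\delta \to 0$. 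The main obstacle is the quantitative action estimate: it requires simultaneously taking $\mu \to \infty$ and $\mu\delta \to 0$ under the gluing constraint, but this is manageable because the constraint forces $\mu\delta \le \epsilon_1 / C_\psi$ for a fixed profile constant $C_\psi > 0$, so shrinking $\epsilon_1$ along the family automatically shrinks $\mu\delta$.
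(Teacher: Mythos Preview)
Your proposal is correct and follows essentially the same strategy as the paper: exhibit a cofinal family of radial Hamiltonians, show that the generators of $\FC^{(\varepsilon,a)}$ split into interior orbits of action $<\varepsilon$ and Reeb chords of length in $(0,a)$, and verify that the action of a chord equals its length up to an error vanishing in the limit. The only noteworthy difference is where you place the bending region: you bend on the inner collar $[1-\delta,1]$, whereas the paper bends on $[1,1+\varepsilon']$ just outside~$W$. Your placement forces the action error $\epsilon_2+\mu\delta$ to depend on the slope, which you then control via the gluing constraint $\mu\delta C_\psi=\epsilon_1-\epsilon_2$; the paper's placement gives an action error of order $\varepsilon'(T+1)$, which for a fixed window $(\varepsilon,a)$ is automatically $O(\varepsilon')$ without any coupling to~$\mu$. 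Both work, but the paper's choice makes the estimate slightly cleaner. One small technical point you should tighten: as written, your Morse perturbation is supported strictly below $r=1-\delta$, so at the seam $r=1-\delta$ the Hamiltonian is still constant and $X_H=0$ there, producing a degenerate family of constant orbits on $\Lambda\times\{1-\delta\}$; the paper avoids this by perturbing throughout the whole region where $|H'|<\varepsilon'$, and you should do likewise.
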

\begin{proof}
	We start with Hamiltonians $H=H_{\mu,\varepsilon'}$ as in Figure~\ref{fig:Hmu} defined for $\mu\notin\Scal,$ $0<4\varepsilon'<\varepsilon$ by 
\begin{itemize}
	\item $H$ only depends on $r$,
	\item $H\equiv -\varepsilon'$ for $r<1$,
	\item $H=\mu(r-1)-2\varepsilon'$ for $r\geq1+\varepsilon'$,
	\item $H$ is convex.
\end{itemize}


\begin{figure}[h]
	\centering
		\includegraphics{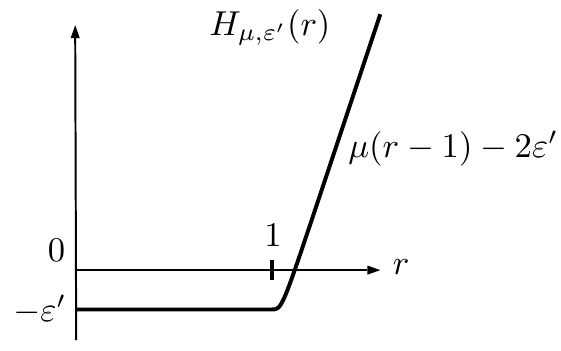}
	\caption{The function $H_{\mu,\varepsilon'}$}
	\label{fig:Hmu}
\end{figure}

\noindent Then we perturb $H$ in the region where $|\frac{d}{dr}H(r)|<\varepsilon'$ to a function still denoted by~$H$ such that $L$ and $\varphi_H^1(L)$ intersect transversely and such that in the perturbation region the new $H$ satisfies $\|H\|_{C^0}\leq \varepsilon'$ and $\|\lambda(X_H)\|_{C^0}<\varepsilon'$, making it a regular admissible Hamiltonian in $\acute\Hcal$. Note that $\{H_{\mu,\varepsilon'}\}$ is cofinal in $\acute\Hcal$ for $\mu\to\infty,$ $\varepsilon'\to0$. Then critical points $x$ of $\Acal_H$ correspond either to Reeb chords from $\Lambda$ to $\Lambda$ with $H(x(t))$ and $(\frac d{dr}H)(x(t))$ constant in $t$ and have action 
	\begin{eqnarray*}
		\Acal_H(x) &=& \int_0^1 x^*\lambda-\int_0^1 H(x(t))\;dt\\
		&=& \int_0^1\lambda\left(\frac d{dr}H \cdot R_{\lambda|_M}(x(t))\right)\; dt-H\\
		&=& \frac d{dr}H-H,
	\end{eqnarray*}
or they are $\varepsilon'$-short $X_H$-chords, namely $\int_0^1|\lambda(\dot x(t))|\;dt<\varepsilon'$, from $L$ to $L$, such that 
	\begin{eqnarray*}
		|\Acal_H(x)| &\leq& \int_0^1 |\lambda(\dot x(t))|+ |H(x(t))|\;dt\leq 2\varepsilon'\\
		&\leq& \frac12\varepsilon.
	\end{eqnarray*}
In the first case the term $H(x)$ tends to zero in the direct limit, so Reeb chords have limit action $\frac d{dr}H={\rm length}(x)>\varepsilon$, and in the second case the action of the critical points lies outside the action window.  
\end{proof}

In analogy to the positive part we define the non-positive part $\WslantH^0(W,L):=\WslantH^{\varepsilon}(W,L)$ for $0<\varepsilon<\min\Scal_{>0}$. For all $a\notin\Scal$ the long exact sequence~(\ref{longexact}) becomes
\begin{equation*}
	\ldots\to \WslantH^0(W,L)\to \WslantH^{a}(W,L)\to \WslantH_+^{a}(W,L)\to \WslantH^0(W,L)\to\ldots.
\end{equation*}
One can perform the perturbation of the family of functions $\{H_{\mu,\varepsilon'}\}$ in the proof above such that finite set $L\cap\varphi_H^1(L)$ in the perturbation region is constant, which implies that $\WslantH^{a}(W,L)$ and $\WslantH^{a}_+(W,L)$ are isomorphic up to an error of finite dimension independent of $a$. Thus, $\WslantH$ grows exponentially if and only if $\WslantH_+$ grows exponentially. In~\cite{CO17} it is mentioned that $\WslantH^0(W,L)$ corresponds to the Morse-cohomology of $L$. 

Note that even though $\WslantH$ is defined as a direct limit, for finite action windows $(a,b)$ the homology $\WslantH^{(a,b)}$ is already attained by a Hamiltonian $H\in\acute \Hcal$ that is $C^2$-small for $r<1$, at $r=1$ sharply increases and has asymptotic slope $\mu>b$.

\subsection{V-shaped wrapped Floer homology}\label{sec:vwh}

We construct V-shaped wrapped Floer homology by using a different class of Hamiltonians. A Hamiltonian is called $\WcheckH$-admissible if
\begin{equation*}
	\begin{cases}
		H<0 \mbox{ on $M\times\{1\}$,}	\\
		\exists b<-\mu\colon H(x,r)=h(r)=\mu r+b\mbox{ on $M\times[1,\infty)$.}
	\end{cases}
\label{vadmissible}
\end{equation*}
Denote the set of $\WcheckH$-admissible regular Hamiltonians by $\widecheck \Hcal$. Again, using continuation homomorphisms we can define for $a,b\notin\Scal$ the direct limit homology 
\begin{equation*}
	\WcheckH^{(a,b)}(W,L)=\lim_{\longrightarrow} \FH^{(a,b)}(W,L).
\end{equation*}
In the language of~\cite{CO17} this is the homology of the trivial Liouville cobordism with Lagrangian $([0,1]\times M, [0,1]\times \Lambda)$ with filling $(W,L)$. This homology is different from wrapped Floer homology. The paper~\cite{CFO10} suggests that there is a long exact sequence splitting $\WcheckH$ into wrapped Floer homology and wrapped Floer cohomology with interesting behavior in the ``$0$-part'' $\WcheckH^{(-\varepsilon,\varepsilon)}(W,L)$. Since we are only interested in the positive part of the homology, Proposition~\ref{acuteischeck} is sufficient for our purposes.

\begin{proof}[Proof of Proposition~\ref{acuteischeck}]
We proceed by deforming the Hamiltonians. We consider a cofinal family of Hamiltonians in $\acute \Hcal$ and show that each such Hamiltonian can be deformed to a Hamiltonian in $\widecheck\Hcal$ such that the set of deformed Hamiltonians forms a cofinal family. 
The cofinal family in $\acute\Hcal$ is the family $\{H_{\mu,\varepsilon'}\}$ from the proof of Lemma~\ref{cut0off}. Recall that critical points of $H_{\mu,\varepsilon'}$ are either $\varepsilon'$-short trajectories with action $\leq2\varepsilon'\leq\frac12\varepsilon$ or Reeb trajectories with length $>\varepsilon$ and action $>\varepsilon-\varepsilon'>\frac12\varepsilon$. Thus in the chain complex $\FC^{(\frac12\varepsilon,a)}(H_{\mu,\varepsilon'})$ the trajectories of the first type are quotiented out. 

To define the cofinal family in $\widecheck\Hcal$ choose $\delta>0$ such that $L$ is conical for $r\in[1-\delta,1]$ and $0<\varepsilon<\min|\Scal|$ such that $2\varepsilon<\delta$. Start with $G=G_{\mu,\nu,\varepsilon'}$ as depicted in Figure~\ref{fig:Gmunu} for $\nu\leq0<\mu$ with $\mu,\nu\notin\Scal, 0<4\varepsilon'<\varepsilon$ with the following properties:
\begin{itemize}
	\item $G$ depends only on $r$,
	\item $G(r)\equiv -\frac12\delta\nu$ for $r<1-\delta$,
	\item $G(r)= \nu (r-1)-2\varepsilon'$ for $r\in [1-\frac12\delta,1-\varepsilon']$,
	\item $G(1)= -\varepsilon'$ and $G'(0)=0$,
	\item $G(r)= \mu (r-1)-2\varepsilon'$ for $r\geq1+\varepsilon'$,
	\item $G$ is convex for $r\in[1-\varepsilon',1+\varepsilon']$ and concave for $r\in[1-\delta,1-\frac12\delta]$.
\end{itemize}

\begin{figure}[h]
	\centering
		\includegraphics{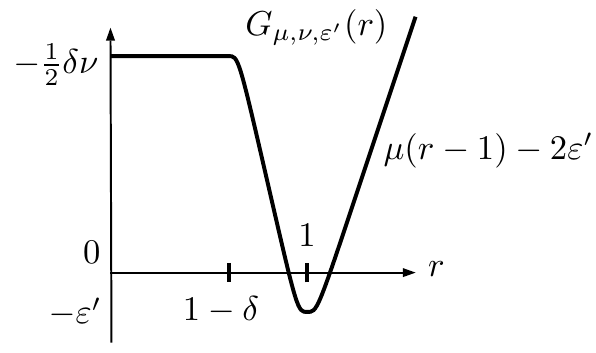}
	\caption{The function $G_{\mu,\nu,\varepsilon'}$}
	\label{fig:Gmunu}
\end{figure}

\noindent Then we perturb $G$ in the region where $|\frac{d}{dr}G(r)|<\varepsilon'$ to a function still denoted by~$G$ such that~$L$ and $\varphi_G^1(L)$ intersect transversely and such that in the perturbation region the new $G$ satisfies $\|G\|_{C^0}\leq \varepsilon'$ and $\|\lambda(X_G)\|_{C^0}\leq\varepsilon'$, making it a regular admissible Hamiltonian in $\acute\Hcal$. Note that for $\nu=0$ we have $G_{0,\mu,\varepsilon'}=H_{\mu,\varepsilon'}$. Critical points of $\Acal_G$ come in several types distinguished by their location. We can compute their action as in the proof of Lemma~\ref{cut0off}:
\begin{itemize}
	\item[(I)] $r\leq1-\delta$: $\frac14\varepsilon$-short trajectories with $G\sim-\frac12\delta\nu>0$ and with action $<\frac14\varepsilon-(-\frac12\delta\nu)<\frac12\varepsilon$,
	\item[(II)] $1-\delta<r<1-\frac12\delta$: backward Reeb trajectories with length in $(\nu,-\varepsilon)$, with $G\sim-\frac12\delta\nu>0$ and action $<-\varepsilon-(-\frac12\delta\nu)<0$,
	\item[(III)] $1-\varepsilon'<r<1$: backwards Reeb trajectories with length in $(\nu,-\varepsilon)$, with $G\sim-\varepsilon'$ and action $<-\varepsilon-(-\varepsilon')<0$,
	\item[(IV)] $r\approx1$: $\varepsilon'$-short trajectories with $G\sim-\varepsilon'$ and with action $<\varepsilon'-(-\varepsilon')<\frac12\varepsilon$,
	\item[(V)] $r>1$: Reeb trajectories with length in $(\varepsilon,\mu)$, $G\sim-\varepsilon'$ and action $>\varepsilon-(-\varepsilon')>\frac12\varepsilon$.
\end{itemize}
Thus in the chain complex $\FC^{(\frac12\varepsilon,a)}(G)$ all critical points other than of type (V) get quotiented out. It is clear from this description that we can monotonously deform $H_{\mu,\varepsilon'}=G_{0,\mu,\varepsilon'}$ to $G_{\nu,\mu,\varepsilon'}$ through Hamiltonians of type $G$ by lowering the parameter $\nu$. Since for $r\geq1$ this deformation does not change the function, we conclude that the continuation homomorphism $\FH^{(\frac12\varepsilon)}(H_{\mu,\varepsilon'})\to\FH^{(\frac12,a)}(G_{\nu,\mu,\varepsilon'})$ is lower diagonal and thus an isomorphism. Taking the limit $(\varepsilon',\nu,\mu)\to(-\infty,\infty,0)$ for $\FH^{(\frac12\varepsilon,a)}(G_{\nu,\mu,\varepsilon'})$  is thus the same as taking the limit $(\mu,\varepsilon')\to(0,\infty)$ for $\FH^{(\frac12\varepsilon,a)}(H_{\mu,\varepsilon'})$.

To show that the isomorphisms commute with morphisms induced by inclusion of filtered chain complexes note that for parameters $(\nu,\mu,\varepsilon')$ we have that the two chain complexes are not only isomorphic, but identical, $\FC^I(G_{(\nu,\mu,\varepsilon')})=\FC^I(H_{(\mu,\varepsilon')})$, for any interval $I=(a,b)$ with $\frac12\varepsilon<a<b\leq \infty$. This means that at the chain level inclusions trivially commute with the identity, thus morphisms induced by inclusion commute with isomorphisms induced by identity. Taking the direct limit preserves commutative diagrams and we are done.

For finite action windows one can even take a shortcut in the above argument since one can find parameters $(\nu,\mu,\varepsilon')$ sufficiently close to $(-\infty,\infty,0)$ such that $\FH^I(G_{\nu,\mu,\varepsilon'})\sim\WcheckH^I(W,L)$ and $\FH^I(H_{\mu,\varepsilon'})\sim\WslantH^I(W,L)$ for both $I=(a,b)$ and $I=(a',b')$ and the proof finishes before taking direct limits.
\end{proof}

As an alternative we can choose not to perturb $G$ around $r=1$. If $G'(0)=0$ and $G''(0)>0$, the critical manifold of type (IV) consists of constant orbits, can be identified with $\{0\}\times\Lambda$ and is Morse--Bott. This way it becomes transparent that the 0-part $\WcheckH^0(W,L)$ of V-shaped wrapped Floer homology can be identified with the Morse cohomology of $\Lambda$. 

\section{Lagrangian Rabinowitz--Floer Homology}\label{sec:lrfh}

We introduce three types of Lagrangian Rabinowitz--Floer homology. We start with an exposition of Lagrangian Rabinowitz--Floer homology with autonomous Hamiltonian (AH). This is the standard Lagrangian Rabinowitz--Floer homology. That the Hamiltonian is autonomous means that the critical orbits of the functional are contained in a fixed energy surface which leads to a much lighter analysis than for time-dependent Hamiltonians. In our construction of $\AH$ we will work with just one fixed Hamiltonian function $H$. While many different choices of autonomous Hamiltonians would result in isometric homologies $\AH$, where filtered versions have the same dimension growth, we do not elaborate on this, since this independence will later on automatically follow in the setting of $\TH$, where an even larger class of (time-dependent) Hamiltonians is used.

To show that AH is isomorphic to $\WcheckH$, we introduce Lagrangian Rabinowitz--Floer homology with perturbed Lagrange multiplier (PH), following~\cite{CFO10}. Since the proof of Proposition~\ref{alrfhvwh} can be found in the paper~\cite{CFO10} up to small changes, we only sketch the construction of $\PH$. 

Finally, we introduce Lagrangian Rabinowitz--Floer homology for time-dependent Hamiltonians (TH). We will study invariance of growth of TH under monotone changes of the Hamiltonian and derive uniform growth properties by a sandwich argument. Further, we show how to encode changes of the target Legendrian in the functional and how to derive uniform growth properties for time-dependent Reeb chords from $\Lambda$ to $\Lambda'$, where $\Lambda'$ is a Legendrian isotopic to $\Lambda$.

\subsection{Autonomous Lagrangian Rabinowitz--Floer homology (AH)}\label{sec:alrfh}
\subsubsection*{The action functional}
Let $H:\haat W\to \RR$ be a smooth function on $\haat W$ such that $0$ is a regular value (later $H$ is specifically chosen). We define the action functional $\acal_{H}:\Pcal(L)\times \RR\to\RR$ by 
\begin{eqnarray}\label{acal}
	\acal_{H}(x,\eta)&=&\int_0^1x^*\lambda-\eta\int_0^1H(x(t))\;dt.
\end{eqnarray}
A pair $(x,\eta)$ is a critical point of $\acal_H$ if and only if it satisfies the equations
\begin{equation*}\label{crit:alrfh}
\left\{\begin{array}{rcl}
		\dot x(t)&=&\eta X_{H}(x(t)),\\
		H\circ x&\equiv&0.
	\end{array}\right.
\end{equation*}
The first equation implies that $x$ is a Hamiltonian orbit from $L$ to $L$ with period $\eta$ (flowing backwards if $\eta<0$). The second equation implies that the image of $x$ is contained in the hypersurface defined by $H=0$.

If $H$ depends only on $r$, $H(r)=0$ only for $r=1$ and $H'(1)=1$, then $\Crit\acal_H$ is the set of Reeb orbits from $\Lambda$ to $\Lambda$ with period $\eta$ (running backwards if $\eta<0$), and $\acal_H(x,\eta)=\eta$ at critical points. Note that for $\eta=0$ the critical points are constant orbits that form the critical manifold $\Lambda=L\cap H^{-1}(0)$. Thus $\acal$ is never Morse. If $(W,L)$ is regular, all critical points with $\eta\neq0$ are regular and the critical manifold at $\eta=0$ is Morse--Bott. Since we only have one nontrivial critical manifold, we do not focus on the Morse--Bott situation. We choose a Morse function on $\Lambda$ and abusing notation we denote by $\Crit\acal_H$ the union of the isolated critical points of~$\acal_H$ with the critical points of the Morse function. The action of a critical point of the Morse function is, by definition, $0=\acal_H(\Lambda)$.

\subsubsection*{Choice of Hamiltonian}
We fix a smooth Hamiltonian such that
\begin{itemize}
	\item $H$ depends only on $r$,
	\item $H\equiv -\frac23\delta$ for $r<1-\delta$,
	\item $H=r-1$ for $r>1$,
	\item $H$ is convex.
\end{itemize}
Then the critical points of $\acal_H$ are as described above. The choices of constants are for compatibility with the other homologies.

\subsubsection*{Moduli spaces of Floer strips}
We choose again an asymptotically conical almost complex structure $J$ on $\haat W$. It induces the metric on $\Pcal(L)\times\RR$
\begin{equation*}
	\langle(\hat x_1,\hat\eta_1),(\hat x_2,\hat\eta_2)\rangle=\int_0^1\omega(\hat x_1,J\hat x_2)\; dt+\hat\eta_1\hat\eta_2.
\end{equation*}
For this inner product the $L^2$-gradient equation of $\acal_H$ is the Rabinowitz--Floer equation
\begin{equation*}
	\begin{cases}
		\partial_s x+J(x)[\partial_t x-\eta X_H(x(s,t))]=0,\\
		\partial_s\eta +\int_0^1H(x(s,t))\;dt=0.
	\end{cases}
\end{equation*}
In addition we choose a Riemannian metric $g$ on the only nontrivial critical manifold $\Lambda\times\{\eta=0\}$. For two critical points $(x_1,\eta_1)$ and $(x_2,\eta_2)$ of $\acal_H$ we consider the moduli space $\tiilde\Mcal((x_1,\eta_1),(x_2,\eta_2),H,J)$ of gradient flow lines with cascades from $(x_1,\eta_1)$ to $(x_2,\eta_2)$. We denote the subset where the linearization of the Floer equation has Fredholm index $k$ by $\tiilde\Mcal^k((x_1,\eta_1),(x_2,\eta_2),H,J)$. On this space there is a natural action by $s$-translation of the domain (if there are multiple cascades, then there is one such action per cascade). We take the quotient by this action to obtain the reduced moduli space of gradient flow lines $\Mcal^{k-1}((x_1,\eta_1),(x_2,\eta_2),H,J)$ (if there are multiple cascades, then the dimensional shift is by the number of cascades). 

For regular $(W,L)$ and generic $J$ and $g$ we have transversality and compactness modulo breaking for these moduli-spaces. 

\subsubsection*{Chain complex}
We define the chain groups $\AC^a(H)$ as free $\ZZ_2$-module generated over $\Crit\acal_H$,
\begin{equation*}
	\AC^a(H)=\sum_{\acal_H(x,\eta)<a}\ZZ_2\cdot(x,\eta).
\end{equation*}
The differential is defined by counting modulo $\ZZ_2$ isolated Rabinowitz--Floer-strips:
\begin{equation*}
	\partial(x,\eta)=\sum_{(x',\eta')\in\Crit\acal}\#_{\ZZ_2}\Mcal^0((x,\eta),(x',\eta'),H,J).
\end{equation*}
By a gluing argument we can identify $\partial^2$ with counting broken cascades in $\partial\Mcal^1$, which is zero modulo $2$. Thus we can define $\AH^{(a,b)}(W,L)$ as the filtered homology of this chain complex. 

\subsection{The relation between AH and V-shaped wrapped Floer homology}\label{sec:sequence}
The goal of this subsection is to prove Proposition~\ref{alrfhvwh}. It follows almost exactly as its analogue for the closed string case in~\cite{CFO10}. The difference is in the analysis when we show $L^\infty$-bounds for $x$ and its derivatives in order to prove compactness modulo breaking of moduli spaces of Floer strips $(x(s,t),\eta(s))$. For the $L^\infty$-bound on $x$ we invoke the maximum principle which is possible because Floer strips satisfy Neumann conditions at their boundary. For the $L^\infty$-bound on the first derivatives of $x$ we perform a bubbling analysis, where we use the hypothesis that $[\omega]$ vanishes on $\pi_2(W,L)$ to exclude bubbling of disks at the boundary. 

The remaining argumentation remains completely unchanged. We sketch it here, for details see~\cite{CFO10}. 

\subsubsection*{Perturbed Lagrangian Rabinowitz--Floer homology}
As our main tool for the proof we introduce perturbed Lagrangian Rabinowitz--Floer homology (PH), which is defined like AH but for the functional $\pacal:\Pcal(L)\times\RR\to\RR$,
\begin{equation*}
\pacal_{H,\alpha,\beta}(x,\eta)=\int_0^1 x^*\lambda - \alpha(\eta)\int_0^1H(x(t))\;dt+\beta(\eta),
\label{PA}
\end{equation*}
depending on the smooth function $H:\haat W\to\RR$ (with properties specified later on) and $\alpha,\beta:\RR\to\RR$. In this subsection we always assume $H=H(r)$ to depend only on~$r$. 

\subsubsection*{Characterization of the critical points}
A pair $(x,\eta)$ is a critical point of $\pacal_{H,\alpha,\beta}$ iff
\begin{equation}
\label{Pcrit}
	\begin{cases}
		\dot x(t)=\alpha(\eta)X_H(x(t)),\\
		\dot\alpha(\eta)\int_0^1 H(x(t))\;dt=\dot\beta(\eta).
	\end{cases}
\end{equation}
On the Morse--Bott component of the set of critical points, we consider critical points of a Morse function on this manifold. Note that, since~$H$ only depends on~$r$, $X_H=H'(r)R_\lambda$ and thus all critical points of $\pacal$ correspond to Reeb chords of length $T=\alpha(\eta)H'$. Passing from the set of Reeb chords of length $T$ to the corresponding set of critical points amounts to finding numbers $r,\eta$ such that 
\begin{equation}
\label{Pcrit'}
	\begin{cases}
		T=\alpha(\eta)H'(r),\\
		\dot\alpha(\eta)H(r)=\dot\beta(\eta).
	\end{cases}
\end{equation}
The action at a critical point equals $\pacal_{H,\alpha,\beta}(x,\eta)=\alpha(\eta)(H'(r) - H(r))+\beta(\eta)$.

\subsubsection*{Floer equations}
Given an asymptotically conical almost complex structure $J$, the negative $L^2$-gradient equation is equivalent to the Floer equations for $(u(s,t),\eta(s))$:
\begin{equation}
\label{Pfloer}
	\begin{cases}
		\partial_s u+J(u)[\partial_t u-\alpha(\eta)X_H(u)]=0,\\
		\partial_s\eta -\dot\alpha(\eta)\int_0^1H(u)\;dt+\dot\beta(\eta)=0.
	\end{cases}
\end{equation}
In the case of nontrivial critical manifolds we use an additional Riemannian metric to define negative gradient equations with cascades.

Since we use PH to interpolate between AH and $\WcheckH$, it is important to observe that the two homologies are indeed special cases of PH.

\subsubsection*{Special case 1: Rabinowitz--Floer homology}
Note that for $H$ as in AH and for $\alpha(\eta)=\eta,\beta(\eta)=0$ we have $\pacal_{H,\eta,0}=\acal_H$. Thus we see directly that 
$$\PH^{(a,b)}(H,\eta,0)\cong\AH^{(a,b)}(W,L).$$

\subsubsection*{Special case 2: Wrapped Floer homology}
Recall that even if $\WcheckH$ is defined as a limit, for finite action windows $(a,b)$ the limit is attained for $H\in\widecheck\Hcal$ that is $C^2$-small for $r<1$, then steeply increases and has slope at infinity $\mu>b$, and then $\FH^{(a,b)}(\Acal_H)=\WcheckH^{(a,b)}$. For $(H,\alpha,\beta)$ with such a function $H$, with $\alpha(\eta)=1$ and $\beta$ a Morse function with only one critical point in 0 the critical equation~(\ref{Pcrit}) splits: $(x,\eta)$ is a critical point iff $x$ is a critical point of $\Acal_H$ and $\eta$ is 0. The Floer equation~(\ref{Pfloer}) also splits: $(x(s,t),\eta(s))$ is a Floer trajectory if $x(s,t)$ is a Floer trajectory of $\Acal_H$ and $\eta(s)$ is a Morse gradient trajectory. Since the Morse complex of $\eta$ consists of just one point, we have directly 
$$\PH^{(a,b)}(H,1,\beta)\equiv \WcheckH^{(a,b)}(W,L).$$

\subsubsection*{Invariance property}
To prove Proposition~\ref{alrfhvwh} we thus need a way to show that different PH are isomorphic. Of course $\PH^{(a,b)}(H,\alpha,\beta)$ depends on the functions $(H,\alpha,\beta)$, but the following proposition shows that it is invariant under homotopies for which the spectrum does not cross the boundaries of the action window.

\begin{proposition}\label{Pinvariance} 
Let $(H_s,\alpha_s,\beta_s),s\in[0,1]$ be a homotopy that is supported in $(0,1)$, such that for all values of $s$ the resulting homology is well defined and such that for no value of $s$ the boundaries of the action window $(a,b)$ lie in the spectrum $\Scal_s$ of $\pacal_{H_s,\alpha_s,\beta_s}$. Then 
	\begin{equation*}
		\PH^{(a,b)}(H_0,\alpha_0,\beta_0)\cong \PH^{(a,b)}(H_1,\alpha_1,\beta_1).
	\end{equation*}
\end{proposition}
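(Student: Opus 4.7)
The plan is to construct continuation chain maps in both directions by counting solutions of $s$-dependent Rabinowitz--Floer equations, and to verify that, under the spectral-gap hypothesis, these maps descend to the filtered chain complexes and are mutually inverse up to chain homotopy. After extending the homotopy $(H_s,\alpha_s,\beta_s)$ to be constant in $s$ outside $(0,1)$ and choosing a generic family $J_s$ of asymptotically conical almost complex structures, the $s$-dependent Floer equations read
\begin{equation*}
\begin{cases}
\partial_s u+J_s(u)[\partial_t u-\alpha_s(\eta)X_{H_s}(u)]=0,\\
\partial_s\eta-\dot\alpha_s(\eta)\int_0^1 H_s(u)\,dt+\dot\beta_s(\eta)=0,
\end{cases}
\end{equation*}
with asymptotics in $\Crit\pacal_{H_0,\alpha_0,\beta_0}$ and $\Crit\pacal_{H_1,\alpha_1,\beta_1}$. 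Counting isolated solutions modulo $2$ defines a chain map $\Phi:\PC(H_0,\alpha_0,\beta_0)\to \PC(H_1,\alpha_1,\beta_1)$; reversing the direction of the homotopy gives $\Psi$ in the opposite direction, and the standard interpolation between the concatenation and the constant homotopy yields chain homotopies $\Psi\circ\Phi\simeq\id\simeq\Phi\circ\Psi$.

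To descend $\Phi$ to the filtered complexes, I would use the action identity
\begin{equation*}
\pacal_{H_1,\alpha_1,\beta_1}(x_+,\eta_+)-\pacal_{H_0,\alpha_0,\beta_0}(x_-,\eta_-)=-E(u)+\int_{-\infty}^{\infty}(\partial_s\pacal_{H_s,\alpha_s,\beta_s})(u(s),\eta(s))\,ds,
\end{equation*}
which is non-positive only up to the homotopy correction term. Since $a,b\notin\Scal_s$ for every $s$ and $[0,1]$ is compact, there is a uniform gap $\delta>0$ with $(a-\delta,a+\delta)\cup(b-\delta,b+\delta)$ disjoint from $\bigcup_s\Scal_s$. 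Partitioning $[0,1]$ into short subintervals $0=s_0<\cdots<s_N=1$ on each of which the integrated correction is uniformly bounded by $\delta$ in absolute value, the continuation map associated with each piece sends $\PC^{(a,b)}$ to $\PC^{(a,b)}$: at either end the action is a critical value and hence lies outside the forbidden windows, so the estimate $\pacal_{s_i}(x_+,\eta_+)<\pacal_{s_{i-1}}(x_-,\eta_-)+\delta<b+\delta$ forces $\pacal_{s_i}(x_+,\eta_+)\leq b-\delta<b$, and likewise for the lower boundary $a$. Composing the maps on consecutive pieces then yields the full filtered chain map, and the analogous argument for $\Psi$ together with the chain-homotopy identities descends to the filtered level.

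The technical heart of the proof is compactness modulo breaking of the continuation moduli spaces, which requires $L^\infty$-bounds on $u$, on $\eta$, and on the first derivatives of $u$. Following the adaptation of~\cite{CFO10} indicated at the beginning of Subsection~\ref{sec:sequence}, the bound on $u$ follows from the maximum principle applied to the radial coordinate, using that $J_s$ is conical at infinity together with the Neumann boundary condition imposed by $\haat L$; sphere bubbling is ruled out by exactness of $\omega$ and disk bubbling on $\haat L$ by the hypothesis $[\omega]|_{\pi_2(W,L)}=0$. I expect the main obstacle to be the uniform $L^\infty$-bound on $\eta$: this is the notoriously delicate step in Rabinowitz--Floer theory, and it is precisely here that one must combine the action bound $\pacal_{H_s,\alpha_s,\beta_s}(u(s),\eta(s))\in(a,b)$ with an $L^2$-estimate on $\nabla\pacal$, exploiting the structure of $H_s$ near $H_s^{-1}(0)$ together with the spectral-gap assumption to obtain uniform control of $\eta$ along any continuation trajectory.
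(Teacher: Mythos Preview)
Your proposal is correct and is precisely an elaboration of what the paper means by ``the usual continuation technique'': the paper gives no further proof of this proposition. Your subdivision argument using the uniform spectral gap and your identification of the $\eta$-bound as the delicate point are both standard and match how the invariance is established in the reference~\cite{CFO10} that the paper follows.
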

This result follows by the usual continuation technique. 

\subsubsection*{The steps connecting the special cases}
One can connect the two special cases above by the five steps from~\cite{CFO10} such that no step has an action crossing and thus Proposition~\ref{Pinvariance} is applicable. We outline the steps (including a preparatory step), discuss what they do to the functional and give details where they differ from~\cite{CFO10}. Remember that we start with $(H,\eta,0)$, where $\pacal_{(H,\eta,0)}=\acal_H$ and want to end with $(H,1,\beta)$, where $\PH^{(a,b)}(H,1,\beta)=\WcheckH^{(a,b)}(W,L)$. We suppose that $a,b$ are not in the spectrum $\Scal$. Since the set $\Scal$ is nowhere dense, there is an $\varepsilon>0$ such that $a,b$ are $\varepsilon$-far from $\Scal$. 

We deform in the following steps, cf.\ Figure~\ref{fig:deformations}:

\begin{figure}[h]
	\centering{
		\includegraphics{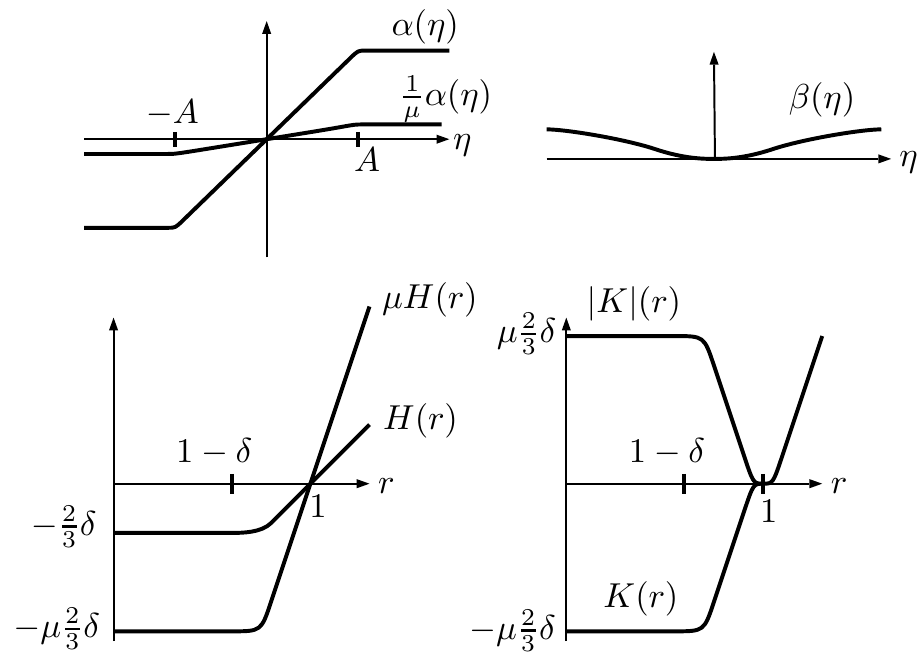}
	}
	\caption{The functions appearing along the deformation from $(H,\eta,0)$ to $(|K|,1,\beta)$}
	\label{fig:deformations}
\end{figure}

\begin{enumerate}
	\item Replace $(H,\eta,0)$ by $(H,\alpha(\eta),0)$ where $\alpha=id$ for $\eta\in(-A,A)$ and $\alpha=\pm A$ for $\pm\eta>A$ (up to a smoothing) for $A$ so large that no Floer strip $(x_s(t),\eta_s)$ with asymptotics in the chosen action window exceeds $|\eta_s|>A$, and such that $-\frac12\delta A<a$.
	\item Replace $(H,\alpha,0)$ by $(\mu H,\frac 1\mu \alpha,0)$ with $\mu=A$.
	\item Deform $(\mu H,\frac1\mu \alpha,0)$ to $(\mu H,\frac1\mu \alpha,\beta)$ for $\beta$ a $C^2$-small Morse function with unique minimum $\beta(0)=0$.
	\item Replace $(\mu H,\frac1\mu \alpha,\beta)$ by $(K,\frac1\mu \alpha,\beta)$ where $K$ is $\mu H$ with a small terrace point at $r=1$.
	\item Homotope $\pacal_{(K,\frac1\mu \alpha,\beta)}$ to $\pacal_{(|K|,1,\beta)}$. 
	\item Perturb $(|K|,1,\beta)$ to $\widecheck H\in\widecheck\Hcal$.  
\end{enumerate}

\subsubsection*{Discussion of the steps}
For each situation and the intermediate deformations we have to check that the triples $(H,\alpha,\beta)$ define a Morse--Bott action functional in the desired action window. To this end we have to show that the moduli spaces of Floer strips are compact modulo breaking which requires establishing $L^\infty$-bounds on $x$, $\eta$ and the derivatives of $x$ along Floer-strips. 

First we outline the proof that there is no action crossing, which coincides with seeing what effect the deformation of the functional has to the set of critical points and the action spectrum. For the complete discussion we refer the reader to~\cite[Section 6]{CFO10}.

\textbf{Step 1} reduces the support of $\dot\alpha$ to a compact interval. This step is a replacement, not a deformation. The Kazdan--Warner type inequality in~\cite{CFO10} suggests that we could also achieve this step through deformations, but it is then much harder to establish bounds (since $\dot\alpha$ is not compactly supported). The isomorphism can be shown directly: For a Reeb chord the equations~(\ref{Pcrit'}) have two kinds of solutions, the ones that coincide for $(H,\eta,0)$ and $(H,\alpha,0)$ and others that lie outside the action window in question. By the choice of $A$, also the Floer strips coincide and thus the chain complexes are the same, for a detailed discussion see~\cite[Section 5.4]{CFO10}. The reason for the condition $-\frac12\delta A<a$ is that it guarantees that critical points of $\Acal_H$ at $r<1-\frac12\delta$ have action outside the action window. These critical points will also be visible as critical points of $\pacal$ from Step 3 on, but not within the action window. 

\textbf{Step 2} gives $H$ the asymptotic slope $\mu=A$, and in compensation flattens $\alpha$ so that it is constant $\pm1$ for $|\eta|$ large. This leaves the action functional unchanged, hence induces trivially an isomorphism by the identity at the chain level.

\textbf{Step 3} puts $\beta$ into its intended form. One can choose the isotopy $(\mu H,\frac 1\mu\alpha,\beta_s)$ such that $\beta_s$ is always a $C^2$-small Morse function with one unique minimum $\beta_s(0)=0$ for $s>0$. The first equation of~(\ref{Pcrit'}) suggests that $\alpha(\eta)$ and thus $\eta$ is not changed by this and the second equation, that locally looks like $(r-1)=\dot\beta(\eta)$, has solutions for $r$ close to $1$ since $\beta_s$ is small. Thus the change moves nonconstant critical points of the functional slightly away from $\{r=1\}$, for $\eta>0$ to $r>1$ and for $\eta<0$ to $r<1$, which changes the action a little. Also, some new critical points appear around $r\sim 1-\delta$ for $\dot\alpha$ small enough to satisfy the second equation and thus $\eta\sim -A$. There $\pacal\sim -(\mu H'- (-\mu\delta)))<-\mu\delta<a$ and thus the critical points lie outside the action window. 

\textbf{Step 4} deforms $H$ to $K$ by introducing a terrace point in a neighborhood of $r=1$ that contains only constant $(H,\alpha,\beta)$-critical points. The deformation to the terrace translates the existing nonconstant critical points in the $r$-direction by a small amount and leaves their action unchanged. It introduces new critical points at $r$ close to $1$. These have action $\pacal=\alpha(H_s'-H_s)+\beta\approx \alpha H_s'=T$ and are therefore at all times close to $\Scal$ and therefore far from the boundary of the action window. This is a preparatory step such that the mirroring in Step 5 is smooth. This perturbation is small.

\textbf{Step 5} directly homotopes the functional rather than the triple $(K,\frac 1\mu\alpha,\beta)$. The homotopy is made such that 
$$\pacal_s(x,\eta)=\int_0^1\lambda(\dot x) - (1-s)\frac 1\mu\alpha K- s|K|\;dt +\beta.$$
In total this lifts up the part left of the terrace point, converting the terrace to a minimum and introducing the desired V-shape for the Hamiltonian. This movement is compensated by changing $\eta$ to 1. This perturbation is large. 
During the deformation critical points can be characterized as Reeb chords from $L$ to $L$ of period $T$ together with numbers $r,\eta$ such that 
\begin{equation}
\label{Pcrit''}
	\begin{cases}
		T=(1-s)\frac1\mu\alpha(\eta)K'(r) + s|K|'(r),\\
		(1-s)\frac1\mu\dot\alpha(\eta)K(r)=\dot\beta(\eta).
	\end{cases}
\end{equation}
The action at a critical point equals 
$$\pacal_{H,\alpha,\beta}(x,\eta)=(1-s)\frac1\mu\alpha(\eta)(K'(r) - K(r))+s\frac1\mu\alpha(\eta)(|K|'(r) - |K|(r))+\beta(\eta).$$

In the following we use that $a,b$ are far from $\Scal$. It is sufficient to show that $r\sim 1$, because then $K(r)\sim |K|(r)\sim 0$ and thus from the first equation~(\ref{Pcrit''}) we have $\pacal_s(x,\eta)\sim T$. But then all critical actions are close to $\Scal$ and therefore far from $a,b$. We distinguish several cases.

[$\eta=0$]: Then the second equation implies that $K=0$, thus $r=1$. Furthermore we see that $K'=0$ and by the first equation $T=0$ and we have a constant orbit.

[$\eta>0$]: Then $\dot\beta(\eta)>0$ and thus by the second equation $r>1$, which implies $K=|K|$. The following are subcases.

[$\eta>0$, $s$ is close to $1$]: Then $(1-s)\frac1\mu\alpha(\eta)+s\sim1$ and the first equation tells us that $T\sim K'$, but since the asymptotic slope $\mu$ of $K$ is far from $\Scal$, this implies $r\sim1$.

[$\eta>0$, $s$ is far from $1$, $\alpha(\eta)=\eta$]: The second equation $(1-s)\frac1\mu K(r)=\dot\beta(\eta)$ tells us that $K(r)\sim 0$, hence $r\sim 1$.

[$\eta>0$, $s$ is far from $1$, $\alpha(\eta)\neq\eta$]: Then $\alpha(\eta)\sim A=\mu$. Then the first equation tells us that $T\sim K'$, but since the asymptotic slope $\mu$ of $K$ is far from $\Scal$, this implies $r\sim 1$. This finishes the case $\eta>0$.

[$\eta<0$]: Then $\dot\beta(\eta)<0$ and thus by the second equation $r<1$, which implies $-K=|K|$. The following are subcases.

[$\eta<0$, $s$ is close to $1$]: Then $(1-s)\frac1\mu\alpha-s\sim-1$ and by the first equation $T\sim -K'=|K|'$. Since $\mu$ is far from $\Scal$ this implies that either $r\sim 1$ or that $r<1-\frac12\delta$ (where $H$ is bent into the constant $-\frac23\delta$). In the first case we are done. The second case implies that $|K(r)|\sim \frac12\delta\mu$ and with $(1-s)\frac1\mu\alpha-s\sim-1$ we get $\pacal\sim -K'+K\leq K\leq-\frac12\delta\mu$. By our choice of $\mu$ this is well out of the action window.

[$\eta<0$, $s$ is far from $1$, $\alpha(\eta)=\eta$]: Then the second equation tells as that $K(r)\sim 0$, hence $r\sim 1$. 

[$\eta<0$, $s$ is far from $1$, $\alpha(\eta)\neq\eta$]: Then $\alpha(\eta)\sim -A=-\mu$ and thus $(1-s)\frac1\mu\alpha+s\sim -1$. Then the first equation tells us that $T\sim -K'=|K|'$, and thus $r$ is either close to $1$ or $\leq 1-\frac12\delta$. But then $\pacal_s\sim -K'+K$ and we conclude as in the case when $s$ is close to 1.

In conclusion in all cases the action of all critical points lies either close to $\Scal$ or outside $[a,b]$ and therefore far from $a,b$. All the above estimates are quantified rigorously in~\cite[Section 6, Step 4]{CFO10}.

\textbf{Step 6} can already be performed in the framework of Section~\ref{sec:wfh}. It replaces $|K|$ by $|K|-\varepsilon$ and perturbs $|K|$ on $\{r\leq\frac \delta2\}$ and also at the minimum at $1$ such that the functional is really Morse and not just Morse--Bott. The resulting Hamiltonian is in $\widecheck\Hcal$ and has the form for which $\FH^{(a,b)}$ coincides with $\WcheckH^{(a,b)}$.

\subsubsection*{Compactness of moduli spaces} 
To get a well defined homology, the moduli spaces of solutions of the Floer equations with specified asymptotics must be compact modulo breaking. This follows from $C^\infty_{\rm loc}$ compactness of the spaces $\tiilde \Mcal((x,\eta),(x',\eta'),H,J)$. For this it is enough to show $L^\infty$-bounds for $x$ and $\eta$ and $L^\infty$-bounds for the first derivatives of $x$. Using the Floer equation~(\ref{Pfloer}) one then also has an $L^\infty$-bound for~$\eta'$. Then bootstrapping~(\ref{Pfloer}) yields $L^\infty$-bounds on higher derivatives.

The bound for the first derivatives of $x$ follows as always by a bubbling analysis since $[\omega]$ vanishes on $\pi_2(W)$ (because $\omega$ is exact) and on $\pi_2(W,L)$ (by hypothesis). 

\subsubsection*{$L^\infty$-bound on $\eta$}
For the situation before Step 1 this is classic. After Step 1 the bound of before still holds by our choice of $A$. 

In Step 2 we have $\beta=0$ and for $|\eta|\geq A$ we have $\dot\alpha(\eta)=0$, so the equation for $\eta$ becomes $\partial_s\eta(s)=0$ for $|\eta|>A$ and thus $|\eta|$ cannot exceed $A$. 

In Steps 3 to 6 the equation for $\eta$ is $\partial_s\eta(s)=-\dot\beta(\eta(s))$ for $|\eta|>A$. Since ${\rm sign}\,\dot\beta(\eta)={\rm sign}\,\eta$, it is clear that $|\eta|$ is bounded by $A$.

\subsubsection*{$L^\infty$-bound on $x$}
%

The proof follows the usual pattern. Since the energy of the Floer strip $(x_s,\eta_s)$ is bounded, the gradient $\nabla\pacal(x_s,\eta_s)$ can be large only for a finite time. One shows separately that a small gradient implies a bound on $r\circ u(s,t)$ and that in the finite time where the gradient is large, $r\circ u(s,t)$ cannot grow too much. Note that there are positive constants $A,A',B,C$ such that at each moment of the whole process we have the bounds
\begin{equation}\label{conditions}
\begin{cases}
	H=h(r)=Ar + A' \mbox{ for all }r\geq 2,\\
	\|\dot\alpha\|_{L^\infty}\leq B <\infty,\\
	\|\dot\beta\|_{L^\infty}\leq C <\infty.
\end{cases}
\end{equation}
We also set $D=\min|H|$.

The following fundamental property says that at values of $s$ where $\nabla\pacal(x_s,\eta_s)$ is small, the radius $r$ stays bounded. This allows us to restrict our attention to the region where $\nabla\pacal$ is large, but this region must be compact since the energy is finite. 
\begin{equation}
	\forall\varepsilon>0\;\exists S\mbox{ such that } \|\nabla \pacal_{H,\alpha,\beta}(x,\eta)\|\leq \varepsilon \Rightarrow \max_{t\in[0,1]} r\circ x(t)\leq S.
\label{fundamental}
\end{equation}
This property holds during all the Steps $2$--$6$, as is shown in~\cite[Lemma 4.7]{CFO10}. The proof holds verbatim in our situation. 

Now we can analyze the radial coordinate $r:M\times\RR\to\RR;(x,r)\to r$ along a local solution $(x_s(t),\eta_s)=(u(s,t),\eta(s))$ of the Floer equation. The crucial observation is the following estimate for the Laplacian.
\begin{lemma} 
	If $H=h(r)$ depends only on $r$, a local solution $(u(s,t),\eta(s))$ of~(\ref{Pfloer}) satisfies at image points in $M\times\RR\subset W$ the bounds 
	\begin{align*}
		\Delta (r\circ u)&= \|\partial_s u\|^2-\partial_s(h'(r)\alpha(\eta)) (r\circ u),\\
		\Delta (\log r\circ u)&\geq -\partial_s(h'(r)\alpha(\eta)).
	\end{align*}
\end{lemma}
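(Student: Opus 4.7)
My plan is to derive both formulas from the conical structure of the symplectization together with the explicit form $H=h(r)$. Since $H$ depends only on $r$, a direct computation on $\haat M = M\times\RR^{>0}$ gives $X_H = h'(r)R_\alpha$. Decomposing $\partial_s u = a R_\alpha + b\,\partial_r + c$ with $c\in\xi$ and using the conical relations $JR_\alpha = r\partial_r$ and $J\partial_r = -R_\alpha/r$, the Floer equation rewritten as $\partial_t u = J\partial_s u + \alpha(\eta)X_H$ yields the pointwise first-order identities
\begin{equation*}
\partial_t r = r\,\alpha(\partial_s u), \qquad \partial_s r = r\bigl(\alpha(\eta)\,h'(r)-\alpha(\partial_t u)\bigr),
\end{equation*}
which serve as the symplectization analogues of the Cauchy--Riemann identities.

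Next I would differentiate the first identity in $t$, the second in $s$, sum, and use $\partial_s\alpha(\partial_t u)-\partial_t\alpha(\partial_s u)=d\alpha(\partial_s u,\partial_t u)$ to obtain
\begin{equation*}
\Delta(r\circ u) = \frac{|\nabla r|^2}{r} + r\,\partial_s\bigl(h'(r)\alpha(\eta)\bigr) - r\,d\alpha(\partial_s u,\partial_t u).
\end{equation*}
Using the Floer equation once more, together with the fact that $d\alpha$ annihilates both $R_\alpha$ and $\partial_r$, reduces the cross term to $d\alpha(\partial_s u_\xi, J\partial_s u_\xi)$; by $\omega$-compatibility of $J|_\xi$ and the restriction of the conical metric $g|_\xi = r\,d\alpha(\cdot,J\cdot)$, this equals $\|\partial_s u_\xi\|^2/r$. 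The orthogonal decomposition $\|\partial_s u\|^2 = |\nabla r|^2/r + \|\partial_s u_\xi\|^2$ produced by the conical frame then absorbs the remaining $\xi$-term into $\|\partial_s u\|^2$, yielding the claimed equality for $\Delta(r\circ u)$.

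For the logarithmic inequality I would combine the first formula with the elementary identity $\Delta(\log r\circ u) = \Delta(r\circ u)/r - |\nabla r|^2/r^2$. Dividing by $r$ makes the term $-\partial_s(h'(r)\alpha(\eta))$ appear directly, while the same orthogonal decomposition of $\|\partial_s u\|^2$ gives $|\nabla r|^2 \leq r\,\|\partial_s u\|^2$, so the leftover $\|\partial_s u\|^2/r - |\nabla r|^2/r^2 = \|\partial_s u_\xi\|^2/r^2 \geq 0$ can simply be dropped, leaving the stated lower bound.

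The main obstacle will be sign bookkeeping: the precise sign of the middle term and of the conical identities depends on consistent choices between $g(X,Y) = \omega(X,JY)$ and $g(X,Y) = \omega(JX,Y)$ for the Riemannian metric, and correspondingly on the convention $JR_\alpha = \pm r\partial_r$. Once these are fixed so that $J$ is genuinely $\omega$-compatible, the combinatorial structure of the computation is standard and parallels the closed-string argument in~\cite{CFO10}, which can be followed step by step for the detailed verification.
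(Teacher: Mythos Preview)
Your proposal is correct and follows exactly the route the paper takes: the paper gives no independent argument but simply cites Lemma~4.1 of~\cite{CFO10}, and what you have outlined is precisely that computation (first-order radial identities from the conical $J$, differentiate and add, identify the $d\alpha$-term via the $\xi$-projection, then pass to $\log r$). Your caveat about sign bookkeeping is well placed: with the paper's stated pair of conventions $JR_\alpha=r\partial_r$ and $g(\cdot,\cdot)=\omega(\cdot,J\cdot)$ one actually gets $g(R_\alpha,R_\alpha)=-r$, so one of the two must be flipped (e.g.\ take $JR_\alpha=-r\partial_r$), after which your intermediate formula acquires the sign $-\,r\,\partial_s(h'(r)\alpha(\eta))$ and your decomposition $\|\partial_s u\|^2=|\nabla r|^2/r+\|\partial_s u_\xi\|^2$ closes the identity exactly as claimed.
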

	This is Lemma~4.1 in~\cite{CFO10}, and the proof is not affected by the change to the open string situation.

If $(H,\alpha,\beta)$ satisfies furthermore~(\ref{conditions}), then we obtain for $r\circ u\geq2$ the bound 
\begin{equation}\label{laplaceestimate}
	 \Delta(\log r\circ u)\geq -A^2B^2D-ABC
\end{equation}
as in~\cite[Lemma~4.2]{CFO10}. 

To invoke the maximum principle, we will need that at the sets $\{t=0\}$ and $\{t=1\}$ (which get mapped to $L$), the function $r\circ u$ satisfies the Neumann condition. At $t=0$ we compute
\begin{align}
	\partial_t (r\circ u(s,t))|_{t=0}&=\langle \nabla r,\partial_t u(s,0)\rangle\label{neumann}\\
	&=\langle \nabla r, J[\partial_s u(s,0)-J\alpha(\eta)X_H]\rangle\nonumber\\
	&=\omega(\nabla r,\partial_s u(s,0)-J\alpha(\eta)X_H)=0.\nonumber
\end{align}
The last equality holds since $JX_H$ and $\nabla r$ are parallel, and since $\nabla r,\partial_su(s,0)\in T_{x(s,0)}L$ which is Lagrangian. For $t=1$ the computation is identical. It also holds for the function $\log r\circ u$. 
\begin{lemma}[$L^\infty$-bound on $x_s$]
	Suppose that $(H,\alpha,\beta)$ satisfies conditions~(\ref{conditions}) and~(\ref{fundamental}). Also suppose that $A\notin\Scal$. Consider a negative gradient flow line $(x_s(t),\eta_s)=(u(s,t),\eta(s))$ with is asymptotic to the critical points $(x_1,\eta_1)$ and $(x_2,\eta_2)$. Let $\Ecal:=\Ecal(u,\eta):=\pacal(x_1,\eta_1)-\pacal(x_2,\eta_2)$ be the energy of this flow line. Then for all $\varepsilon$ and the corresponding $S$ from~(\ref{fundamental}) and for all $(s,t)$ we have the following estimate
	$$\log r\circ u(s,t)\leq \max (\log 2,\log S) + \frac{(A^2B^2D+ABC)\Ecal(u,\eta)^2}{2\varepsilon^4}.$$	
\end{lemma}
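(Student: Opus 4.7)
The plan is to bound $\phi := \log r\circ u$ above at an arbitrary point $(s_0,t_0)$ by applying a perturbed maximum principle on the connected component of $\{\phi > \max(\log 2, \log S)\}$ containing $(s_0,t_0)$. Set $M:=A^2B^2D+ABC$, and assume $\phi(s_0,t_0)>\max(\log 2,\log S)$, since otherwise the estimate holds trivially.

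First I identify $\Omega\subset\RR\times[0,1]$, the connected component of $\{\phi>\max(\log 2,\log S)\}$ containing $(s_0,t_0)$. On $\Omega$ one has $r\circ u>2$, so the subharmonic inequality $\Delta\phi\geq -M$ from~(\ref{laplaceestimate}) applies; and $r\circ u>S$, so by the fundamental property~(\ref{fundamental}), $\|\nabla\pacal(x_s,\eta_s)\|>\varepsilon$ for every $s\in\pi_s(\Omega)$. Since $\pi_s(\Omega)=(a,b)$ is a connected open interval (the continuous image of a connected set), the energy identity gives
\begin{equation*}
\varepsilon^2(b-a)\;\leq\;\int_a^b\|\nabla\pacal(x_s,\eta_s)\|^2\,ds\;\leq\;\Ecal,
\end{equation*}
so $b-a\leq \Ecal/\varepsilon^2$.

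Next I introduce the test function $\Psi(s,t):=\phi(s,t)+\tfrac{M}{2}(s-s_0)^2$. Because the added quadratic is $t$-independent, $\Delta\Psi=\Delta\phi+M\geq 0$ on $\Omega$, and the Neumann condition $\partial_t\phi=0$ at $t=0,1$ established in~(\ref{neumann}) transfers directly to $\Psi$. Reflecting $\Psi$ across $t=0$ and $t=1$ extends it to a function satisfying the same differential inequality on a wider strip, whence the strong maximum principle rules out maxima on the Neumann portion of $\partial\Omega$ and forces $\sup_{\bar\Omega}\Psi$ to be attained on $\partial\Omega\cap(\RR\times(0,1))$. On that portion $\phi=\max(\log 2,\log S)$ and $(s-s_0)^2\leq(b-a)^2\leq\Ecal^2/\varepsilon^4$, so $\Psi\leq\max(\log 2,\log S)+M\Ecal^2/(2\varepsilon^4)$ throughout $\bar\Omega$. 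Evaluating at $(s_0,t_0)$, where the quadratic perturbation vanishes, yields the claimed inequality.

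The main delicacies are verifying that the reflection across the Neumann boundary produces an honest (weak) subharmonic extension --- worth checking because $\Delta\phi\geq-M$ is only asserted on $\{r\geq 2\}$ --- and using~(\ref{fundamental}) together with the finiteness of $\Ecal$ to prevent $\pi_s(\Omega)$ from escaping to $\pm\infty$. Once these are in place, the remainder is a direct computation with the test function $(s-s_0)^2$.
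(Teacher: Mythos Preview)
Your argument is correct and follows essentially the same route as the paper: the test function $\Psi=\log r\circ u+\tfrac{M}{2}(s-s_0)^2$, the Laplacian estimate~(\ref{laplaceestimate}) on $\{r\geq 2\}$, the Neumann condition~(\ref{neumann}), and the energy bound $\Ecal\geq\varepsilon^2\cdot(\text{length of the bad $s$-interval})$ are exactly the ingredients the paper uses. The only organizational difference is that you take $\Omega$ to be the connected component of the superlevel set $\{\phi>\max(\log 2,\log S)\}$ and then bound its $s$-projection via~(\ref{fundamental}) and the energy, whereas the paper first fixes the maximal $s$-interval $[s_-,s_+]$ on which $\|\nabla\pacal\|\geq\varepsilon$ and then restricts to the superlevel set inside it; your $\Omega$ sits inside the paper's, and both lead to the same final inequality.
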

\begin{proof}
		
	We follow~\cite[Proposition 4.3]{CFO10}. Fix $s_0$. Let $[s_-,s_+]$ be the maximal compact interval containing $s_0$ such that 
	$$\forall\sigma\in[s_-,s_+]: \|\nabla\pacal(x_\sigma,\eta(\sigma))\|\geq\varepsilon.$$
	It is possibly empty, but it exists since 
	$$\Ecal=\int_\RR\|\nabla\pacal(x_s,\eta_s)\|^2\;ds=\pacal(x_1,\eta_1)-\pacal(x_2,\eta_2)$$
	is finite and $s_+-s_-\leq\frac\Ecal{\varepsilon^2}$. Because of~(\ref{fundamental}) we have $r\circ x_{s_\pm}(t)<S$ $\forall t\in[0,1]$.
	
	On the strip $[s_-,s_+]\times[0,1]$ the function 
	$$\chi=\log r\circ u + \frac12(A^2B^2D+ABC)(s-s_0)^2$$
	is subharmonic at places where $r\circ u\geq 2$ because of~(\ref{laplaceestimate}). Define the set
	$$\Omega=\{(s,t)\in[s_-,s_+]\times[0,1]\mid \log r\circ u\geq\max(\log 2,\log S)\}.$$ 
	At the boundary $\partial \Omega$ either $\log r\circ u\leq \max(\log 2,\log S)$ or $\log r\circ u$ satisfies the Neumann condition~(\ref{neumann}). By the maximum principle we conclude that $\chi$ has no maximum in the interior of $\Omega$ nor at a boundary point where the Neumann condition holds. We conclude that for $(s,t)\in\Omega$,
\begin{align*}
	\log r\circ u(s,t) &\leq\chi(s,t)\leq \max_{(s,t)\in\Omega} \chi(s,t)\\
	&\leq \max(\log 2,\log S) + \frac12(A^2B^2D+ABC)(\max\{s_0-s_-,s_+-s_0\})^2\\
	&\leq \max(\log 2,\log S) + \frac12(A^2B^2D+ABC)(s_+-s_-)^2\\
	&\leq \max(\log 2,\log S) + \frac12(A^2B^2D+ABC)\left(\frac\Ecal{\varepsilon^2}\right)^2,
\end{align*}
which is the desired bound.
\end{proof}

\subsection{Time-dependent Lagrangian Rabinowitz--Floer homology (TH)}~\label{sec:tlrfh}
In contrast to the homologies we encountered so far, the aim of TH is not to capture the dynamics of the Reeb flow of $\lambda|_M$, but rather to set up a tool that allows us to study positive contactomorphisms. This will result in a homology TH which is an invariant of a Hamiltonian $H$ on $\haat W$. We will find a uniform way to construct such a Hamiltonian from a positive contact Hamiltonian $h^t$ such that TH becomes an invariant of $h^t$. 

\subsubsection*{The action functional}
Let $H^t:W\times [0,1]\to \RR$ be a time-dependent smooth function on $W$. We define the action functional $\tacal_{H^t}:\Pcal(L)\times \RR\to\RR$ by 
\begin{eqnarray}\label{functional}
	\tacal_{H^t}(x,\eta)&=&\frac1\kappa \left(\int_0^1x^*\lambda-\eta\int_0^1H^{\eta t}(x(t))\;dt\right),
\end{eqnarray}
where $\kappa$ is some positive constant discussed later. A pair $(x,\eta)$ is a critical point of $\tacal_{H^t}$ if and only if it satisfies the equations
\begin{equation*}\label{crit:lrfh}
\left\{\begin{array}{rcl}
		\dot x(t)&=&\eta X_{H^t}(x(t)),\\
		H^\eta(x(1))&=&0,
	\end{array}\right.
\end{equation*}
where $X_{H^t}$ is the Hamiltonian vector field generated by $H^t$.
The first equation implies that $x$ is an orbit of $X_{H^t}$, but with time scaled by $\eta$. The second equation, which one deduces by partial integration, implies that the orbit ends on $(H^{\eta})^{-1}(0)$. If $H^t=H$ is autonomous, then (up to the constant $\kappa$) the functional $\tacal_H$ is as for autonomous Rabinowitz--Floer homology $\AH$. Then $H^{-1}(0)$ is a hypersurface for which $\eta$ plays the role of a Lagrange multiplier, and $H(x(t))=0$ for all $t$. For time-dependent $H^t$, however, there is no such hypersurface, and $H^{\eta t}(x(t))$ might be very large or very small for $t<1$.

\subsubsection*{Construction of the Hamiltonians}
For the study of positive contactomorphisms~$\varphi$ it is crucial to carefully construct Hamiltonians~$H^t$ on $W$ in such a way that critical points of $\tacal_{H^t}$ encode dynamical information on $\varphi$, such that the resulting TH is well-defined and such that for monotone deformations of $H^t$ the continuation morphisms are monotone with respect to the action. 

We start with the object we actually want to study: a positive contactomorphism~$\varphi$ of $(M,\alpha)$. By definition there is a positive path of contactomorphisms $\varphi^t$ such that $\varphi^0=id,\;\varphi^1=\varphi$. By the second part of the proof of \cite[Proposition 6.2]{FLS15}, $\varphi^t$ can be deformed with fixed endpoints to $\tilde\varphi^t$ such that $\tilde\varphi^t$ is the Reeb flow of $\alpha$ for $t$ near $0$ and $1$. This means that the contact Hamiltonian $h^t$ on $M$ generating~$\varphi^t$ is constant $\equiv 1$ for $t$ near $0$ and $1$. This deformation can be performed such that the order is preserved: $\tilde h_1^t\leq\tilde h_2^t$ whenever $h_1^t\leq h_2^t$. From now on we assume that this deformation is already performed if not stated differently. 
Then the concatenation of positive contactomorphisms is a positive contactomorphism. More specifically, $h^t$ permits smooth periodic or constant extensions to $t\in\RR$. We choose the extension $h^t\equiv1$ for $t\leq0$, which later will guarantee monotonicity of continuation morphisms. For positive time we choose $h^t$ for every integer step $t\in[k,k+1]$ individually such that $h^t$ and its derivatives have uniform bounds. In the actual applications the choice will be that $h^t$ is periodic for $t\geq0$, see Figure~\ref{fig:ht}. Later on we will use the interval $[0,1]$ to encode the information that we count orbits from our base Legendrian to another Legendrian, and then extend $h^t$ to a periodic function for $t\geq 1$. We can now update Assumption~\ref{f=0} to the following analogon for the new situation:

\begin{figure}[h]
	\centering{
		\includegraphics{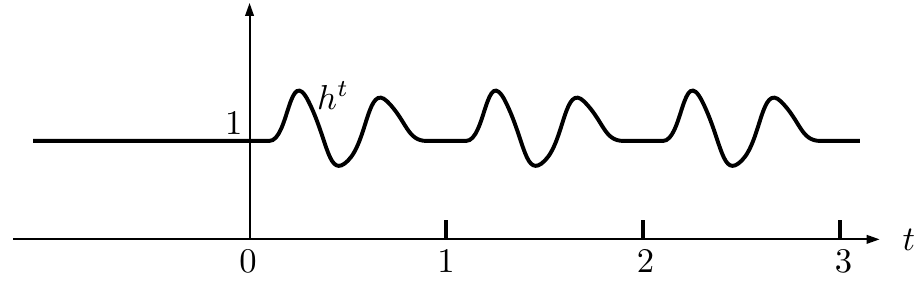}
	}
	\caption{The function $h^t(x)$ at a given point $x\in M$}
	\label{fig:ht}
\end{figure}

\begin{assumption}\label{classofh}
	The pair $(W,L)$ consists of a Liouville domain $(W,\omega,\lambda)$ with contact boundary $(M,\xi=\ker\lambda|_M)$ and an asymptotically conical exact Lagrangian~$L$ with connected Legendrian boundary $\Lambda=\partial L$ such that $\lambda|_L=0$, and such that $[\omega]|_{\pi_2(W,L)}=0$. The contact Hamiltonian $h^t:M\times\RR\to\RR$ satisfies for some positive constants $c,C,c'$
	\begin{itemize}
		\item $h^t\equiv 1$ for $t\leq0$,
		\item $0<c\leq h^t\leq C$ and $|\frac d{dt}h^t|\leq c'$,
		\item  $\bigcup_{t\neq 0}\varphi^t_{h^t}\Lambda$ and $\Lambda$ intersect transversely.
	\end{itemize}
\end{assumption}
The third assumption implies that the action functional $\tacal_{H^t}$ for $h^t$ defined by~(\ref{primitivesH}) and~(\ref{functional}) is Morse away from $\eta=0$.

To work in the Liouville domain we construct from the contact Hamiltonian $h^t$ on $M$ a Hamiltonian $H^t$ on $W$ in a uniform way, depending on two large enough parameters $\kappa$ and $K$, which we choose for every finite action window individually. The flow lines of $X_{H^t}$ are lifts of $\varphi^t$-flow lines if we set on $M\times\RR^{> 0}$
\begin{equation}
H^t:=rh^t-\kappa.
\label{primitivesH}
\end{equation}
 For such an $H^t$ the critical points of $\tacal_{H^t}$ end in $\{r=\kappa/h^\eta\}$. Changing $\kappa$ does not change critical points in an essential way (provided they do not run into $r=0$), but translates them in the $r$-direction. In order to have a smooth Hamiltonian on $\haat W$ and to get compactness of moduli spaces later on, we deform $H^t$ to depend only on $r$ but not on $t$ for $r\leq1$ and $r\geq\kappa K$, see Figure~\ref{fig:Hr}. To make this precise we choose independently of the action window a constant $\mu\geq\max\{h^t(x)\mid x\in M\}$, a convex smooth function $\rho:\RR^{\geq 0}\to\RR,$ that will play the role of the radius smoothed out over $W$, and a smooth function $\beta:\RR^{\geq0}\to[0,1]$, that will serve as a transition parameter, such that

\begin{figure}[h]
	\centering{
		\includegraphics{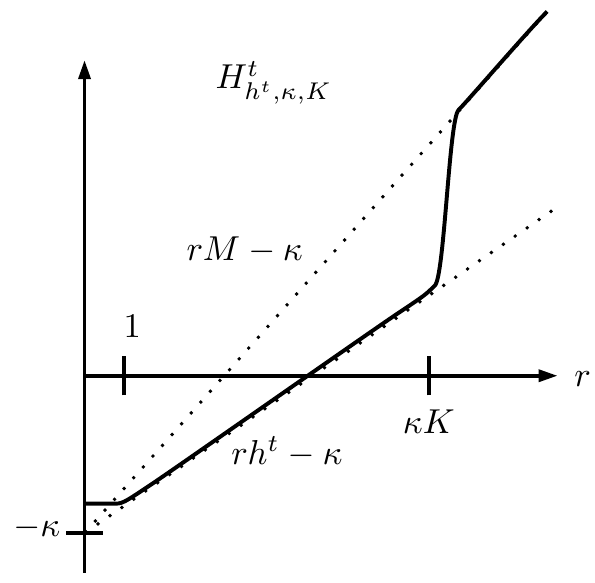}
	}
	\caption{The function $H^t_{h^t,\kappa,K}$ at a fixed time $t$ on a line $\RR^{\geq 0}\times \{x\}$ in dependence of $r$.}
	\label{fig:Hr}
\end{figure}

\begin{eqnarray*}
	\rho(r)&=&\begin{cases} 1-\frac23\delta &\quad\mbox{if}\quad r\leq1-\delta,\\r&\quad\mbox{if}\quad r\geq 1,\end{cases}\\
	\beta(r)&=&\begin{cases}0&\quad\mbox{if}\quad r\leq1-\delta \quad \mbox{or}\quad r\geq \kappa K+1,\\1&\quad\mbox{if}\quad 1\leq r\leq \kappa K.\end{cases}
\end{eqnarray*}
Then we define the Hamiltonian
\begin{eqnarray*}
	H^t_{h^t,\kappa,K}(x,r)&=&\rho(r)\Big(\beta(r)\,h^t(x)+(1-\beta(r))\,\mu\Big)-\kappa.
\end{eqnarray*}
The factor $\frac1\kappa$ in Definition~(\ref{functional}) does not influence the critical points, but only their action values. In fact, the following lemma shows that for $\kappa,K$ large enough, the critical points (up to translation in the $r$-direction) and their actions do not depend on the choice of the constants.

\begin{lemma}\label{independence}
	Let $h^t$ satisfy Assumption~\ref{classofh}. Given $a<b$, there are constants $\kappa_0,K_0$ such that for $\kappa\geq\kappa_0$ and $K\geq K_0$ the following holds. If $(x,\eta)$ is a critical point with $a\leq\tacal_{h^t,\kappa,K}(x,\eta)\leq b$, then the radial component of $x$ stays in $[1,K\kappa]$ for $t\in[0,1]$ and $\tacal_{h^t,\kappa,K}(x,\eta)=\eta$. 
\end{lemma}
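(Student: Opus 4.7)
The plan is to dissect the two critical point equations region-by-region. Step~1 uses the boundary equation $H^\eta(x(1))=0$ to locate $x(1)$ inside the middle slab $\{1\le r\le\kappa K\}$; Step~2 uses the ODE $\dot x=\eta X_{H^{\eta t}}$ together with a Gr\"onwall estimate to keep the whole orbit inside this slab; Step~3 substitutes into the action to read off $\tacal=\eta$. For Step~1, $H^t$ takes the constant value $(1-\tfrac{2}{3}\delta)\mu-\kappa$ for $r\le 1-\delta$, is bounded above by $\mu-\kappa$ in the inner transition, equals $rh^t-\kappa$ in the slab, is bounded below by $\kappa(Kh^t-1)$ in the outer transition, and equals $r\mu-\kappa$ (vanishing only at $r=\kappa/\mu$) for $r\ge\kappa K+1$. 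Picking $\kappa_0>\mu$ and $K_0>\max(1/c,1/\mu)$ makes the first two expressions negative, the fourth positive and the fifth nowhere zero, so $H^\eta(x(1))=0$ forces $r(1)\in[1,\kappa K]$ and then $r(1)=\kappa/h^\eta(x(1))\in[\kappa/C,\kappa/c]$.

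For Step~2, on the slab the identity $\iota_{X_{H^t}}\omega=dH^t$ with $\omega=d(r\alpha)$ gives $X_{H^t}=h^t R_\alpha-rR_\alpha(h^t)\partial_r+\xi_{H^t}$ with $\xi_{H^t}\in\ker\alpha$, so along a critical orbit $\dot r=-\eta r\,R_\alpha(h^{\eta t})|_{x(t)}$ and Gr\"onwall yields $r(t)\in[\kappa/(Ce^{|\eta|c_1}),(\kappa/c)e^{|\eta|c_1}]$, with $c_1:=\|R_\alpha h\|_{C^0}$, as long as $r$ stays in the slab. Setting $M:=\max(|a|,|b|)$ and choosing $\kappa_0\ge Ce^{Mc_1}$ and $K_0\ge e^{Mc_1}/c$ closes a bootstrap with Step~3: orbits with $|\eta|\le M$ remain in the slab where Step~3 gives $\tacal=\eta\in[a,b]$, confirming $|\eta|\le M$; conversely any critical orbit with $|\eta|>M$ and $\tacal\in[a,b]$ must be ruled out by quantitative estimates on excursions outside the slab. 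Step~3 is then routine: when $r(t)\in[1,\kappa K]$ throughout, $\lambda(X_{H^{\eta t}})=rh^{\eta t}$, so $\int_0^1 x^\ast\lambda=\eta\int_0^1 rh^{\eta t}\,dt$ and $\int_0^1 H^{\eta t}(x)\,dt=\int_0^1 rh^{\eta t}\,dt-\kappa$, and substitution gives $\tacal=\eta$.

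The main obstacle lies in closing the bootstrap of Step~2, specifically in handling the outer transition $r\in[\kappa K,\kappa K+1]$: a computation shows that the correction $\lambda(X_{H^t})-H^t-\kappa$ there equals $r^2\beta'(h-\mu)$, which can be of order $\kappa^2 K^2$, so an orbit that excursions into this strip can alter $\tacal-\eta$ by an amount much larger than the window $[a,b]$. Closing the bootstrap therefore requires choosing $\kappa_0,K_0$ large enough that any such excursion either violates the Gr\"onwall estimate above (because reaching $r=\kappa K$ from $r(1)\le\kappa/c$ forces $|\eta|\ge c_1^{-1}\log(Kc)$) or pushes $\tacal$ well outside $[a,b]$. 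I would carry this out by a case analysis according to the sign of $\eta R_\alpha(h)$ at the entry point $r=\kappa K$, together with a time-budget estimate bounding how long the orbit can linger in the transition strip; once done, the three steps combine to give both the radial confinement and the identity $\tacal=\eta$.
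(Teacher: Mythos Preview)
The paper does not prove this lemma; it simply cites \cite[Proposition~4.3]{AF12} and says the argument applies verbatim. Your three-step plan (locate $x(1)$ via $H^\eta(x(1))=0$; propagate by a Gr\"onwall bound on $\dot r/r$; read off $\tacal=\eta$ in the slab) is exactly the standard approach and is what one finds in \cite{AF12}. So there is nothing to compare at the level of strategy.

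Your self-identified gap, however, is real as written but closes more simply than the case analysis you propose. The point you are missing is a \emph{sign}: in the outer transition the correction $r^2\beta'(r)\bigl(h^t-\mu\bigr)$ is \emph{non-negative}, because $\beta'\le 0$ on $[\kappa K,\kappa K+1]$ and $h^t\le\mu$. Hence, writing $F^t:=\lambda(X_{H^t})-H^t$, one has the uniform \emph{lower} bound $F^t\ge\kappa-C'$ on all of $\haat W$, where $C'$ depends only on $\mu,\delta$ and $\|\beta'\|_{L^\infty([1-\delta,1])}$ (the inner transition gives a bounded negative contribution; the slab, the outer transition and the region $r\ge\kappa K+1$ give $F^t\ge\kappa$). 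Since for a critical point $\tacal=\frac{\eta}{\kappa}\int_0^1 F^{\eta t}(x(t))\,dt$, this lower bound alone yields, for $\kappa\ge 2C'$,
\[
|\eta|\;\le\;\frac{\max(|a|,|b|)}{1-C'/\kappa}\;\le\;2\max(|a|,|b|)=:2M,
\]
\emph{before} you know where the orbit lies. With this a~priori bound on $|\eta|$ in hand, your Gr\"onwall estimate (which in fact holds on all of $r>1-\delta$, since the $\partial_r$-component of $X_{H^t}$ equals $-\rho\beta\,R_\alpha(h^t)$ everywhere) immediately gives $r(t)\in[(\kappa/C)e^{-2Mc_1'},(\kappa/c)e^{2Mc_1'}]$, and choosing $\kappa_0\ge Ce^{2Mc_1'}$, $K_0\ge c^{-1}e^{2Mc_1'}$ confines the orbit to the slab. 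Step~3 then finishes the proof exactly as you wrote. No case analysis on the sign of $\eta R_\alpha(h)$ or time-budget estimate for excursions is needed: the orbit never excurses. (A small auxiliary observation that helps here: the levels $\{r=1-\delta\}$ and $\{r=\kappa K+1\}$ are invariant for the $r$-ODE since $\beta$ vanishes there, so the orbit is automatically trapped in $(1-\delta,\kappa K+1)$.)
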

\begin{proof} 
	A detailed proof and in particular explicit choices for the constants $\kappa_0,K_0$ are given in~\cite[Proposition 4.3]{AF12} in the setup of cotangent bundles. It applies verbatim in the present setting.
\end{proof} 

In the following we abbreviate $\tacal_{h^t,\kappa,K}=\tacal$.

\begin{remark}
	For $h^t\equiv 1$ one can choose for all action windows $K=\kappa=\mu=1$. The function $H^t$ coincides with $H$ in the definition of $\acal$ in~(\ref{acal}). Thus, the functional $\tacal$ coincides with $\acal$, and AH is a special case of TH.
\end{remark}

\subsubsection*{The differential}
The differential is constructed exactly as in the autonomous case: Choose an asymptotically conical almost complex structure $J$ to define an $L^2$-metric with respect to which one considers negative gradient flow lines that correspond to solutions of a perturbed Cauchy--Riemann equation. We get transversality of moduli spaces by perturbing $J$. The desired $L^\infty$-bounds on the flow lines follow as before since for $r\geq K\kappa+1$ the Hamiltonian $H^t_{h^t,\kappa,K}$ is autonomous and linear in $r$. Thus, our choice of deforming $H^t_{h^t,\kappa,K}$ to an autonomous Hamiltonian for large~$r$ guarantees compactness. The drawback is that the resulting homology counts the orbits of $h^t$ only in the chosen action window. 

\subsubsection*{Action windows and definition of the homology}
As Lemma~\ref{independence} shows, the definition of TH must be done for finite action windows first, and then is extended. To do this we  choose $\kappa_0, K_0$ so large that Lemma~\ref{independence} holds for critical points with action in~$[a,b]$. We first generate the chain complex $\TC^b=\TC^b(h^t,\kappa,K)$ by the critical points of $\tacal$ with action $\leq b\in\RR$ and then define $\TC^{(a,b)}=\TC^b/\TC^a$ and its homology $\TH^{(a,b)}=\TH^{(a,b)}(h^t,\kappa,K)$. These groups are independent of $\kappa\geq\kappa_0,K\geq K_0$, which is why we denote them by $\TH^{(a,b)}(h^t)$ for brevity. 

For $a\leq a', b\leq b'$ there are (for $\kappa,K$ large enough) homomorphisms induced by inclusion of generators $\TC^{(a,b)}\to\TC^{(a',b')}$. We define $\TC^{(-\infty,b)}$ as the inverse limit, $\TC^{(a,\infty)}$ as the direct limit and $\TC=\TC^{(-\infty,\infty)}$ as direct inverse limit (in this order, to preserve exactness of long exact sequences), while adjusting $\kappa,K$. It still holds that $\TC^{(a,b)}=\TC^{(-\infty,b)}/\TC^{(-\infty,a)}$. We denote by $\TC_+^T=\TC^{(0,T)}$ the positive part of the chain complex and by $\iota$ the homomorphisms $\TH_+^T\to\TH_+^\infty$ induced by inclusion.

\subsubsection*{Invariance properties}
Consider now a family of Hamiltonians $h_s^t$ such that $\partial_sh_s^t$ is supported in $s\in[0,1]$. Suppose that for the associated family of functionals $\tacal_s:=\tacal_{h^t_s,\kappa,K}$ the constants $\kappa,K$ are chosen uniformly large enough for $[a,b]$. We set $\tacal_-=\tacal_s$ for $s\leq0$ and $\tacal_+=\tacal_s$ for $s\geq1$. The continuation homomorphism $\Phi:\TC(\tacal_-)\to\TC(\tacal_+)$ is defined as in the definition of the differential by counting the 0-dimensional components of the moduli space of curves $(x_s,\eta_s)$ that satisfy the equation
\begin{eqnarray}\label{conteqn}
	\partial_s(x_s,\eta_s)&=&-\nabla\tacal_s(x_s,\eta_s),
\end{eqnarray}
such that $\lim_{s\to\pm\infty}(x_s,\eta_s)=(x_\pm,\eta_\pm)$ for critical points $(x_\pm,\eta_\pm)$ of $\tacal_\pm$. Then $\Phi$ induces an isomorphism $\TH(\tacal_-)\to \TH(\tacal_+)$, because $\eta$ is bounded along deformations, and actually does not depend on the homotopy $h_s$ but only on the endpoints~$h_\pm$.

Unfortunately, this isomorphism does not respect the action the filtration of the homology. We therefore restrict our attention to monotone deformations, i.e.\ $\partial_sh_s^t(x)\geq0\;\forall\; s,t,x$. The following proposition says that such monotone deformations are compatible with the action filtration. In the proof it becomes clear why we extend $h^t$ to be constant for $t\leq0$.

\begin{proposition}[Monotonicity]\label{monotonicity}
	Let $(W,L)$ be a Liouville domain with asymptotically conical exact Lagrangian with $\lambda|_L=0$ such that $[\omega]|_{\pi_2(W,L)}=0$. Let $h_-^t\leq h_+^t$ be two time-dependent positive contact Hamiltonians that satisfy Assumption~\ref{classofh}. Then the continuation homomorphism 
	$$\Phi:\TH(h_-^t)\to \TH(h_+^t)$$
	restricts for every $a$ to 
	$$\Phi|_{\TC^a(h_-^t)}:\TH^a(h_-^t)\to \TH^a(h_+^t).$$
\end{proposition}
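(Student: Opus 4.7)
The standard strategy is to show that along a monotone continuation trajectory the value of the action functional is non-increasing, so that the chain level continuation map restricts to the sublevel complexes $\TC^a$. Concretely, I would pick a smooth monotone homotopy $h_s^t$ from $h_-^t$ to $h_+^t$ with $\partial_s h_s^t\geq 0$ and $\partial_s h_s^t$ compactly supported in $s\in[0,1]$, form the induced homotopy $H_s^t := H^t_{h_s^t,\kappa,K}$ of Hamiltonians on $\haat W$ (with $\kappa,K$ taken uniformly large for the fixed action window, as permitted by Lemma~\ref{independence}), and consider a solution $(x_s,\eta_s)$ of the continuation equation \eqref{conteqn}.

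The key computation is
\begin{equation*}
\tfrac{d}{ds}\,\tacal_s(x_s,\eta_s)\;=\;-\|\nabla\tacal_s(x_s,\eta_s)\|^2\;+\;(\partial_s\tacal_s)(x_s,\eta_s).
\end{equation*}
The first term is manifestly $\leq 0$, so the proposition reduces to showing $(\partial_s\tacal_s)(x,\eta)\leq 0$ for every $(x,\eta)$. From the explicit form of $\tacal_s$ and $H_s^t$ one reads off
\begin{equation*}
(\partial_s\tacal_s)(x,\eta)\;=\;-\tfrac{\eta}{\kappa}\int_0^1 \rho(r(t))\,\beta(r(t))\,(\partial_s h_s^{\eta t})(x(t))\,dt,
\end{equation*}
where $\rho,\beta\geq 0$ and, by monotonicity, $\partial_s h_s^\tau\geq 0$ for all $\tau$. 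So for $\eta\geq 0$ the integrand is non-negative and the overall expression is $\leq 0$.

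The delicate case is $\eta<0$: then the prefactor $-\eta/\kappa$ is positive and one would naively fear $\partial_s\tacal_s>0$. This is exactly where the extension convention $h^t\equiv 1$ for $t\leq 0$ enters. For $\eta<0$ and $t\in[0,1]$, the argument $\eta t$ lies in $[\eta,0]\subset(-\infty,0]$, a region on which $h_s^\tau\equiv 1$ for every $s$, so $\partial_s h_s^{\eta t}\equiv 0$ and hence $(\partial_s\tacal_s)(x,\eta)=0$. Thus in all cases $(\partial_s\tacal_s)(x,\eta)\leq 0$, and $s\mapsto\tacal_s(x_s,\eta_s)$ is monotone non-increasing along continuation trajectories.

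Consequently, any continuation trajectory from $(x_-,\eta_-)\in\Crit\tacal_-$ to $(x_+,\eta_+)\in\Crit\tacal_+$ satisfies $\tacal_+(x_+,\eta_+)\leq\tacal_-(x_-,\eta_-)$. Since the chain level continuation map $\Phi$ counts such trajectories, it sends $\TC^a(h_-^t)$ into $\TC^a(h_+^t)$ for every $a\in\RR\setminus\Scal$, and therefore descends to the filtered homologies. The main technical obstacle in executing this plan is the control of the sign of $\partial_s\tacal_s$ for negative Lagrange multipliers $\eta$, which is resolved by the constant extension $h^t\equiv1$ for $t\leq 0$; apart from this point the argument is a direct adaptation of the standard Rabinowitz--Floer monotonicity computation.
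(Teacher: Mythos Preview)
Your proposal is correct and follows essentially the same route as the paper: both choose a monotone homotopy from $h_-^t$ to $h_+^t$, compute $(\partial_s\tacal_s)(x,\eta)=-\tfrac{\eta}{\kappa}\int_0^1\rho\beta\,\partial_s h_s^{\eta t}\,dt$, and split into the cases $\eta\geq0$ (sign argument) and $\eta<0$ (use $h^t\equiv1$ for $t\leq0$ so the integrand vanishes). The paper works with the specific linear homotopy $h_s^t=h_-^t+\chi(s)(h_+^t-h_-^t)$ and writes the end-to-end action difference as $\tacal_+ = \tacal_- - E + \int\!\!\int(\cdots)$, but this is only a cosmetic difference from your pointwise derivative formulation.
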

\begin{proof}
	It suffices to show that the action is non-increasing along solutions of~(\ref{conteqn}).\\

For the deformation from $h_-^t$ to $h_+^t$, define a monotone smooth function $\chi:\RR\to[0,1]$ such that
\begin{eqnarray*}
	\chi(s)&=&\begin{cases}0&\quad\mbox{if}\quad s\leq0,\\
	1&\quad\mbox{if}\quad s\geq 1,\end{cases}
\end{eqnarray*}
and set $h_s^t:=h^t_- + \chi(s)(h_+^t-h_-^t)$. Denote by $H_s^t$ and $\tacal_s$ the associated Hamiltonians and functionals. The deformation satisfies 
$$\begin{array}{ccccccc}
\frac{d}{ds}H_s^t &=& \chi'(s)(H_+^t-H_-^t) &=& \chi'(s)\,\rho(r)\,\beta(r)(h^t_+-h_-^t) &\geq& 0.
\end{array}$$

 For every $(x,\eta)$ we have,
\begin{eqnarray*}
	\frac{\partial}{\partial s} \tacal_s(x,\eta)&=&\int_0^1-\frac\eta\kappa\chi'(s)(H_+^{\eta t}-H_-^{\eta t})(x(t)) \;dt.\nonumber
\end{eqnarray*}

 Now consider a solution $(x_s,\eta_s)$ of~(\ref{conteqn}). Set $E=\int_{-\infty}^\infty \|\partial_s (x_s,\eta_s)\|^2\;ds$ and $\tacal_\pm=\tacal_\pm(x_\pm,\eta_\pm)$. 
We calculate
\begin{eqnarray}
	\tacal_+&=&\tacal_-+\int_{-\infty}^\infty\frac d{ds}\tacal_s(x_s,\eta_s)\;ds\nonumber\\
	&=&\tacal_-+\int_{-\infty}^\infty\Big(\frac\partial{\partial s}\tacal_s\Big)(x_s,\eta_s)+\big\langle\nabla\tacal_s(x_s,\eta_s),\partial_s(x_s,\eta_s)\big\rangle \;ds\nonumber\\
	&=&\tacal_--E+\int_{-\infty}^\infty\int_0^1-\frac{\eta_s}\kappa\chi'(s)(H_+^{\eta t}-H_-^{\eta t})(x_s(t)) \;dt\;ds.\nonumber
\end{eqnarray}
 If $\eta_s\geq0$, then $-\frac{\eta_s}\kappa\chi'(s)(H_+^{\eta t}-H_-^{\eta t})(x_s(t))\leq0$. If $\eta_s\leq0$, then $h_+^{\eta t}=h_-^{\eta t}=1$ and thus $-\frac{\eta_s}\kappa\chi'(s)(H_+^{\eta t}-H_-^{\eta t})(x_s(t))=0$. It follows that $\tacal_+\leq\tacal_-$. 
\end{proof}

We use this proposition to show that along deformations, although the exponential growth of TH might change, its positivity is preserved. We accomplish this by comparing with a Reeb flow. 

	%

\begin{proof}[Proof of Proposition~\ref{continuation}]
Suppose for now that $h_0^t\leq h_1^t$. Proposition~\ref{monotonicity} shows that then the deformation morphism $\Phi$ restricts to $\Phi|_{\TC^T(h_0^t)}:
\TC^T(h_0^t)\to \TC^T(h_1^t)$. Furthermore $h_0^t=h_1^t$ for $t\leq 0$, thus $\tacal(h_0^t)$ and $\tacal(h_1^t)$ have the same critical points with non-positive action, and constant critical points $(x,\eta)$ with $\eta\leq0$ are solutions of~(\ref{conteqn}). Since the action is non-increasing along solutions of~(\ref{conteqn}) we get that 
$$\Phi|_{\TC^0(h_0^t)}\;:\;\TC^0(h_0^t)\;\to\; \TC^0(h_1^t)$$
 is a lower diagonal isomorphism. For the homomorphism $\Phi_*$ induced in the quotient we thus have
\begin{eqnarray*}
	\Phi_*(\TC_+^T(h_0^t))&=&\Phi(\TC^T(h_0^t))/\Phi(\TC^0(h_0^t))\\
	&=&\Phi(\TC^T(h_0^t))/\iota^{0,T}\TC^0(h_1^t)\\
	&\subseteq&\TC_+^T(h_1^t).
\end{eqnarray*}

Since $\Phi$ induces an isomorphism in $\TH^\infty$, abbreviating $\iota=\iota_{+}^{T,\infty}$, we conclude that 
\begin{eqnarray}\label{monotonicity1}                                                                             
	\dim \iota_*\TH_+^T(h_0^t)&\leq&\dim \iota_*\TH_+^T(h_1^t).
\end{eqnarray}

Now consider constants $c>0$. Unlike $h^t$, most constants are not equal to $1$ for $t$ near $0$ or $1$, so we need to modify them to fit our setup. From the proof of~\cite[Proposition~6.2]{FLS15} it is clear that for every $c$ there is a function $h_c^t:M\times[0,1]\to\RR$ with $h_c^t=1$ for $t$ near 0 and 1 and such that the induced flow $\varphi_{h_c^t}^t$ is a time-reparametrization of the Reeb flow $\varphi_1^t$ that satisfies $\varphi^1_{h_c^t}=\varphi^c_1$. The functions $h_c^t$ can be chosen continuously in $c$ such that $h_c^t\leq h_C^t$ if $c\leq C$. Extend $h_c^t$ constantly for $t<0$ and periodically for $t>0$. 
By construction of $h_c^t$ it is clear that there exists a monotone function $\tau_c:\RR^{\geq 0}\to\RR^{\geq 0}$ such that $\varphi_{h_c^t}^{\tau_c(t)}=\varphi_{1}^{t}$. For such a function we have $\tau_c(ck)=k$ for all $k\in\NN$ and thus $\lim_{t\to\infty}\frac{\tau_c(t)} t=\frac1c$.
The critical points of $\tacal_{1}$ with action $T$ are in bijection with the critical points of $\tacal_{h_c^t}$ with action $\tau_c(t)$. We deform $h_1^t=1$ to $h_C^t$ through $h_c^t$, while deforming the action window $(\varepsilon,T)$ with $\varepsilon,T\notin\Scal$ through $(\tau_c(\varepsilon),\tau_c(T))$. Since $\tau_c$ is injective, this deformation has no window crossing and we therefore have
\begin{eqnarray*}
	\iota_*\TH_+^{\tau_C(T)}(h_C^t)&\cong&\iota_*\TH_+^{T}(1).
\end{eqnarray*}
From Section~\ref{sec:sequence} we know that $\Gamma^{\rm symp}(W,L)$ is the exponential dimensional growth of~$\AH(W,L)$, thus also of $\TH(1)$. We conclude that for every $C$ the exponential growth of $\dim \iota_*\TH_+^T(h_C^t)$ is $C\Gamma^{\rm symp}(W,L)$.

Now choose $c,C>0$ such that $c\leq h^t\leq C$. The functions $h_c^t$ can be chosen such that still $h_c^t\leq h^t\leq h_C^t$ see Figure~\ref{fig:htsandwich}. 

\begin{figure}[h]
	\centering{
		\includegraphics{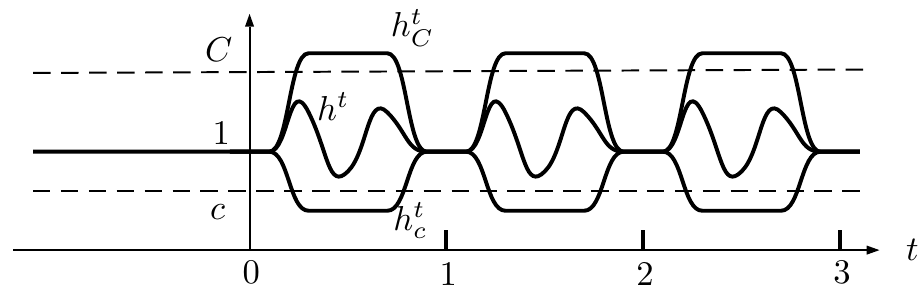}
	}
	\caption{The sandwiching of $h^t$ by $h_c^t$ and $h_C^t$.}
	\label{fig:htsandwich}
\end{figure}

We apply~(\ref{monotonicity1}) twice, first to a monotone deformation from $h_c^t$ to $h^t$ and then to a monotone deformation from $h^t$ to $h_C^t$. 
With~(\ref{monotonicity1}), this results in 
$$\dim \iota_*\TH_+^T(h_c^t)\leq\dim \iota_*\TH_+^T(h^t)\leq\dim \iota_*\TH_+^T(h_C^t).$$
Therefore, the quantitative bound claimed in Proposition~\ref{continuation} follows. In particular, for every positive path of contactomorphisms the growth of Rabinowitz--Floer homology is positive if and only if it is positive for a Reeb flow.
\end{proof}

Finally we show that from the homological growth of the number of chords from $\Lambda$ to $\Lambda$ we can deduce information about the growth the number of chords from~$\Lambda$ to a Legendrian~$\Lambda'$ that is isotopic to~$\Lambda$ through Legendrians. In~\cite{AM17} Alves and Meiwes showed the corresponding result for Reeb chords by constructing a $\WslantH(W,L)$-module structure on the wrapped Floer homology $\WslantH(W,L\to L')$, whose generators are Reeb chords from $L$ to nearby Lagrangians $L'$. In the following proof we do not need algebraic structures on Rabinowitz--Floer homology, but prove the result by just using continuation morphisms. Since the class of Reeb flows is included in the class of positive paths of contactomorphisms, one can replace the use of module structures in~\cite{AM17} by the following elementary proof.

	%
\begin{proof}[Proof of Proposition~\ref{changeLegendrian}]
	The idea of the proof is a rearrangement of information: If $\psi(\Lambda)=\Lambda'$, then the set of $\varphi^t$-chords from $\Lambda$ to $\Lambda'$ is in bijection with the set of $\psi^{-1}\circ\varphi^t\circ\psi$-chords from $\psi^{-1}\Lambda$ to $\Lambda$, see Figure~\ref{fig:LambdaToLambdaprime}.
	
	\begin{figure}[h]
	\centering{
		\includegraphics{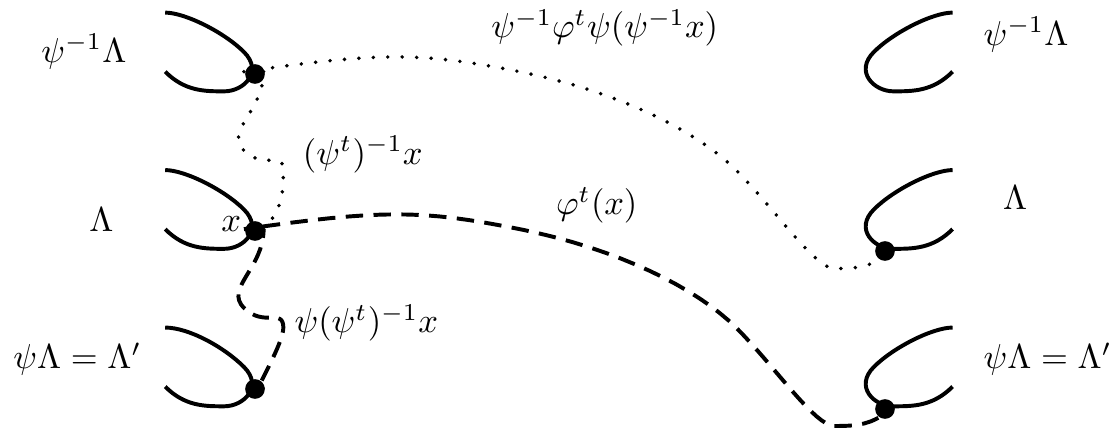}
	}
	\caption{A sketch of the geometric situation. A point $x$ is taken along the dotted line from $\Lambda$ to $\psi^{-1}\Lambda$ by $(\psi^t)^{-1}$ and then to $\Lambda$ by the flow of $\varphi^t$, conjugated by $\psi^{-1}$. Applying $\psi$ to everything yields the dashed line which, ignoring the first part from $\Lambda'$ to $\Lambda$, is a $\varphi^t$-chord from $\Lambda$ to $\Lambda'$.}
	\label{fig:LambdaToLambdaprime}
\end{figure}

We construct a positive path of contactomorphisms that is a Reeb flow for~$t\leq 0$, and the conjugate of $\varphi_{h^t}^t$ by $\psi^{-1}$ precomposed with $\psi^{-1}$ for $t\geq 1$ as follows. By the isotopy extension theorem we can extend the isotopy of Legendrians to a path of contactomorphisms $\psi^t,t\in[0,1],$ with $\psi^0=\id$ and $\psi^1=\psi$. Denote by $g^t$ the (not necessarily positive) contact Hamiltonian that generates $(\psi^t)^{-1}$. Denote by $\varphi^t,t\in\RR,$ the path of contactomorphisms generated by $h^t$ and denote by $\tilde h^t$ the Hamiltonian that generates $\tilde\varphi^t:=\psi^{-1}\circ\varphi^t\circ\psi$. 
Note that $\tilde\varphi^{kt},t\in\RR,$ is generated by $k \tilde h^{kt}$ and that $\tilde\varphi^{kt}\circ(\psi^t)^{-1},t\in[0,1],$ is generated by $\tilde g^t:=(k\tilde h^{kt})\#g^t=k\tilde h^{kt}+g^t\circ(\tilde\varphi^{kt})^{-1}$. 
We choose $k$ so large that $\tilde g^t\geq\max\{\tilde h^t\}$. 
Using a convex combination with a Reeb flow, we find a Hamiltonian $\overline g^t$ such that $\varphi_{\overline g^t}^1=\varphi_{\tilde g^t}^1$, such that $\overline g^t=1$ for $t$ close to 0 and $t$ close to 1, and such that $\overline g^t\geq \tilde h^t$.

Define $\overline h\vphantom{h}^t$ by
	\begin{equation*}
	\overline h\vphantom{h}^t=\begin{cases}
		1 &\mbox{ if }t<0,\\
		\overline g^t &\mbox{ if }t\in[0,1],\\
		\tilde h^t &\mbox{ if }t>1.
	\end{cases}
	\end{equation*}
This function is continuous at $t=0$ and $t=1$ because there $1=\overline g^t=\tilde h^t$. The induced positive path of contactomorphisms $\varphi^t_{\overline h\vphantom{h}^t}$ is a Reeb flow for $t\leq 0$, the time-$1$ map is~$\varphi_{\overline h\vphantom{h}^t}^1=\tilde\varphi^k\circ (\psi)^{-1}$ and for $t\geq 1$ we have $\varphi_{\overline h\vphantom{h}^t}^t=\tilde\varphi^{t-1+k}\circ (\psi)^{-1}$.

By our choice of $k$, $\overline h\vphantom{h}^t\geq\tilde h^t$ for all $t$. We can therefore deform $\tilde h^t$ to $\overline h\vphantom{h}^t$ in a monotone increasing way. This deformation thus induces a monotone morphism in $\TH^a$ by Proposition~\ref{monotonicity}. By Proposition~\ref{continuation}, the exponential dimensional growth of $\TH(\overline h\vphantom{h}^t)$ is still positive. The critical points $(x,T+1)$ of $\pacal_{\overline h\vphantom{h}^t}$ with $T+1>1$ are orbits that start on $\Lambda$, pass through $\tilde\varphi^k\circ\psi^{-1}(x(0))$ at time $1$ and then follow $\tilde\varphi^t$ for time $T$ to land on $\Lambda$. They are in bijection with $\tilde\varphi^t$-orbits from $\psi^{-1}(x(0))\in\psi^{-1}\Lambda$ to $\Lambda$ of length $k+T$, with a ``starting tail'' following $(\psi^t)^{-1}$ from $\Lambda$ to $\psi^{-1}\Lambda$. By application of $\psi$ and by forgetting the starting tail, they are in bijection with $\varphi^t$-orbits from $\Lambda$ to $\Lambda'$.

From this we conclude that the number of $\varphi^t$-orbits from $\Lambda$ to $\Lambda'$ with length between $k$ and $T$ is the dimension of the chain complex $\TC^{(1,T)}(\overline h\vphantom{h}^t)$, and thus grows at least as fast as the dimension of $\TH^{(1,T)}(\overline h\vphantom{h}^t)$, which by Proposition~\ref{continuation} has at least exponential growth $\min\{\overline h\vphantom{h}^t\}\Gamma^{\rm symp}(W,L)$. But $\min\{\overline h\vphantom{h}^t\}=\min\{\tilde h^t\}$ by construction. Define the function $f$ by $(\psi^{-1})^*\alpha=f\alpha$. We have $X_{\tilde h^t}(x)=D\psi^{-1}(X_{h^t}(\psi x))$ for every $x\in M$ and thus
\begin{equation*}
	\tilde h^t(x)=\alpha(X_{\tilde h^t}(x))=\alpha(D\psi^{-1}(X_{h^t}(\psi x)))=(\psi^{-1})^*\alpha(X_{h^t}(\psi x))=f(\psi x)h^t(\psi x).
\end{equation*}
Thus, $\min\{\tilde h^t\}\geq \min \{f\}\min \{h^t\}$, and the quantitative assertion in Proposition~\ref{changeLegendrian} follows.
\end{proof}

\section{Proof of Theorem~\ref{poscont}}\label{sec:proof}

The main result follows by composition of the results of the previous sections. The lower bounds on volume growth using a method introduced in~\cite{A17}.

\begin{proof}[Proof of Theorem~\ref{poscont}]
	Let $\varphi^t, t\in\RR$ be a positive path of contactomorphisms with $\varphi^0=id,\varphi^1=\varphi$ such that its underlying contact Hamiltonian $h^t$  is constant 1 for $t\leq0$ and 1-periodic for $t>0$. We will show that the exponential growth of $\Vol(\varphi^t\Lambda)$ is positive, where $\Vol$ is taken with respect to some well chosen Riemannian metric. By a theorem of Yomdin~\cite{Y} this volume growth provides a lower bound on the topological entropy. 
	
From Propositions~\ref{acuteischeck} and \ref{alrfhvwh} we deduce the following chain of isomorphisms
$$\WslantH_+^a(W,L)\cong \WcheckH_+^a(W,L)\cong \AH^{(0,a)}(W,L),$$
for all $a>0$ such that $a\notin\Scal$. Thus, the exponential growth of $\dim\AH^{(0,a)}(W,L)$ is $\Gamma^{\rm symp}(W,L)$. Proposition~\ref{continuation} shows that the exponential growth of $\dim \TH^{(0,a)}(h^t)$ is at least $c\Gamma^{\rm symp}(W,L)$, where $c=\min h^t>0$. 

Since $\Lambda$ is a Legendrian sphere, there is a tubular neighborhood $\Ncal= B^{n}\times \Lambda$ of $\Lambda$ in $M$ that is a product of a ball and the Legendrian spheres $\Lambda$. By isotopy extension each of the fibers $\Lambda'$ is the image of $\Lambda$ by a contactomorphism $\psi$ of $M$ isotopic to the identity such that $(\psi^{-1})^*\alpha=f\alpha$. After choosing a smaller neighborhood, one can assume that $\min f\geq 1-\varepsilon$ for a uniform $\varepsilon>0$. By Proposition~\ref{changeLegendrian} we see that for all fibers $\Lambda'$ the number of $\varphi^t$-chords from $\Lambda$ to $\Lambda'$ has growth at least $\gamma:=(1-\varepsilon)c\Gamma^{\rm symp}(W,L)$.

Now we choose our Riemannian metric $g$ such that $g$ orthogonally splits on $\Ncal= B^{n}\times \Lambda$. We show that $\Vol_n(\bigcup_{t\in[0,T]} \varphi^t\Lambda)$ has growth at least $\gamma$ since it cuts through $\Ncal$ many times. In the following we regard $\varphi^t$ as a map $\varphi(x,t):(\Lambda,\RR)\to \bigcup_{t\in\RR} \varphi^t\Lambda=\varphi(\Lambda\times\RR)$.

Let $\pi:\Ncal\to B^n$ be the projection to the fiber. Then by Sard's theorem there is a subset $B'\subset B^n$ of full measure such that the map 
$$P:\varphi^{-1}(\pi^{-1}(B'))\subset\Lambda\times\RR\to B';\; (x,t)\to \pi\circ\varphi(x,t)$$ 
has only regular values. At these points $P^{-1}(b)\cap\Lambda\times[0,T]$ is finite for every $T$, so we can consider its number of elements $n_b(T)$.
Note that $P$ being regular implies that the functional in the proof of Proposition~\ref{continuation} is Morse for all action windows $(1,T+1), T>0,$ so the corresponding $\TH$ is well defined for these action windows. Since $n_b(T)$ counts the number of $\varphi^t$-trajectories from $\Lambda$ to $b\times\Lambda\subset\Ncal$, $n_b(T)$ has growth at least $\gamma$. 
Since
$$\Vol_n(\varphi(\Lambda\times[0,T])\cap\Ncal\geq \int_{B'} n_b(T)\; d\Vol_{n},$$
and since the integrand on the right hand side uniformly has growth at least $\gamma$, we conclude that $\Vol_n(\bigcup_{t\in[0,T]}\varphi^t\Lambda)\geq \Vol_n(\bigcup_{t\in[0,T]}\varphi^t\Lambda)\cap\Ncal$ has growth at least $\gamma$. 

Now to relate the growth of $\Vol_n(\bigcup_{t\in[0,T]} \varphi^t\Lambda)$ to the growth of $\Vol_n(\varphi^t\Lambda)$, we note that $|X_{h^t}|_g\leq C$ is bounded in length from above by compactness of $M$. Thus we can estimate
$$\Vol_n(\varphi(\Lambda\times[0,T])\leq C\int_0^T \Vol_{n-1}(\varphi^t\Lambda)\;dt.$$
Since the left hand side has growth at least $\gamma$, the same holds for the right hand side. This is only possible if the integrand also has growth at least $\gamma$. We conclude that the volume growth of $\varphi^t\Lambda$, thus the volume growth of $\varphi$ on $M$, and thus the topological entropy of $\varphi$ are at least $c\Gamma^{\rm symp}(W,L)>0$, since $\varepsilon>0$ was arbitrary. 
\end{proof}

\begin{remark}
	Note that for the definition of symplectic growth we required that $(\lambda,L)$ is regular. In contrast we do not require regularity for $\varphi^t$. This is because we only consider Hamiltonians as in the proof Proposition~\ref{changeLegendrian}, which are perturbed in the interval $[0,1]$.
\end{remark}


\end{document}